\tikzstyle{dashed}=[dash pattern=on 9pt off 3pt]
\newtheorem{theorem}{Theorem}[section]
\newtheorem*{conjecture*}{Conjecture}
\newtheorem{conjecture}{Conjecture}[section]
\newtheorem{lemma}[theorem]{Lemma}
\newtheorem{proposition}[theorem]{Proposition}
\theoremstyle{definition}
\newtheorem{definition}[theorem]{Definition}
\newtheorem{example}[theorem]{Example}
\theoremstyle{remark}
\newtheorem{remark}[theorem]{Remark}
\newtheorem{notation}[theorem]{Notation}
\titleformat*{\section}{\large\bfseries}
\titleformat*{\subsection}{\normalsize}
\begin{document}
	
	\title{On the Forking Path Conjecture}
	\author{Gonzalo Jim\'enez}
	\date{\today}
	\maketitle
	
	\begin{abstract}
		\noindent 	
		We prove the Forking Path Conjecture for all but one element in the symmetric group $S_4$. Two specific paths in the rex graph of that element give a counterexample for the conjecture. We propose a refined conjecture for the longest element of any $S_n$.
	\end{abstract}
	\vspace{-1cm}
	\renewcommand\contentsname{\small \begin{center}
			Contents\vspace{-0.5cm}
	\end{center}}
	\setcounter{tocdepth}{1}
	\setlength{\cftbeforesecskip}{0cm}
	\setlength{\cftparskip}{0cm}
	\vspace{-0.1cm}
	\tableofcontents
	\section{Introduction}
	
	In the 2011 paper \cite{Libedinsky2011new}, N. Libedinsky studied morphisms induced by paths in the reduced expression graph (see Definition \ref{defrexgraph}) of extra-large Coxeter systems. He showed the surprising fact that morphisms induced by complete paths are idempotents on the corresponding Bott-Samelson bimodules. 
	In 2016, B. Elias provided in \cite{elias2016thicker} an extension of his work with M. Khovanov \cite{elias2010diagrammatics}, where they gave a diagrammatic presentation of the category of Bott-Samelson bimodules $\mathbb{BS}$Bim. Here, morphisms can be translated into linear combinations of planar graphs, and stacking planar graphs can be interpreted as composing morphisms. Elias used their diagrammatic calculus to construct an idempotent using the reduced expression graph of the longest element $w_0$, in the symmetric group $S_n$. 
	These results, plus thousands of cases checked by computer, motivated Libedinsky in 2017 to announce the {\it Forking Path Conjecture} \cite[Section 6.3]{libGentle}.
	\begin{conjecture*}[Forking Path Conjecture]\label{fpc}
		Let $x \in S_n$, let $p, q$ be two complete paths with the same starting points and the same ending points in the reduced expression graph of $x$. The morphisms induced by these paths are equal.
	\end{conjecture*}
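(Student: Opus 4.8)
The plan is to recast the conjecture as a coherence statement and to localize it to a short list of elementary relations. Recall that, by the Matsumoto--Tits theorem, the rex graph of $x$ is connected, its edges being the commutation moves $\cdots st\cdots\to\cdots ts\cdots$ and the braid moves $\cdots s_is_{i+1}s_i\cdots\to\cdots s_{i+1}s_is_{i+1}\cdots$; each contributes, respectively, a $4$-valent or a $6$-valent generating morphism of $\mathbb{BS}$Bim, and the morphism attached to a path is the vertical composite of these generators. To show that two paths $p,q$ with common source $\underline{x}_0$ and common target $\underline{x}_1$ induce the same morphism, I would first observe that one cannot simply invert $q$: the braid-move morphisms are \emph{not} invertible, since in the split Grothendieck group $[B_sB_tB_s]-[B_tB_sB_t]=[B_s]-[B_t]\neq 0$, so $B_sB_tB_s\not\cong B_tB_sB_t$. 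The comparison must therefore be carried out directly, and the natural device is homotopy of paths.

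Next I would pass to the category generated by the oriented edges of the rex graph modulo the relations carried by the rank-two and rank-three standard parabolic subgroups of $S_n$. The two-dimensional refinement of Matsumoto's theorem, due to Tits, guarantees that in this category any two paths with the same endpoints are equal; equivalently, the associated $2$-complex, whose $2$-cells are exactly these parabolic relations, is simply connected. It thus suffices to check that the corresponding identities hold among the $4$- and $6$-valent morphisms. The ``commutativity squares'' coming from two moves of disjoint support hold by the interchange law, and the parabolics of type $A_1\times A_1\times A_1$ and $A_1\times A_2$ degenerate to such commutations. The only genuinely new relation is the $A_3$ (Zamolodchikov) relation attached to three consecutive generators $s_i,s_{i+1},s_{i+2}$ — that is, to a copy of $S_4$ and its longest element $w_0$.

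It therefore remains to verify a single diagrammatic identity: the equality of the two composite morphisms obtained by traversing the two arcs of the fundamental cycle in the rex graph of $w_0\in S_4$ (the \emph{fork} whose branches reconverge). Concretely, I would list the sixteen reduced expressions of $w_0$, record the $4$- and $6$-valent vertices along each branch, stack them, and reduce the two resulting planar diagrams with the Elias--Khovanov relations to decide whether their difference vanishes. For any $x\in S_4$ with $x\neq w_0$ the length is at most $5$, so no full length-six $A_3$ pattern occurs, the rex graph carries no Zamolodchikov cell, and its cycles are generated by commutativity squares alone; the argument above then already yields the conjecture. The whole weight of the problem rests on this one $A_3$ computation.

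This final check is where I expect the real difficulty, and it is the natural fault line of the conjecture. Every other step is formal: connectedness and simple connectivity are purely combinatorial, and the lower relations are immediate. If the two branches of the $w_0$ cycle induced equal morphisms, then by locality the conjecture would propagate to all of $S_4$ and, since the only new three-generator relation in any $S_n$ is again $A_3$, to every symmetric group. The main obstacle is thus to determine honestly whether the Zamolodchikov relation is satisfied on the nose in $\mathbb{BS}$Bim; should a single uncancelled lower-degree term survive the simplification, it would not yield a proof but would instead pinpoint, at precisely this one element, the counterexample announced in the abstract.
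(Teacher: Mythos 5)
Your proposal has a genuine gap, and in fact the statement you are trying to prove is false: the paper exhibits a counterexample (for the element $12321\in S_4$) and proves the conjecture only for the remaining elements of $S_4$. The central problem with your argument is that it discards the hypothesis that $p$ and $q$ are \emph{complete} paths. A homotopy-of-paths argument in the $2$-complex whose $2$-cells are the rank-two and rank-three parabolic relations would, if valid, prove $f(p)=f(q)$ for \emph{arbitrary} paths with common endpoints, and that is false for elementary reasons you yourself record: the braid morphisms $f_{st}$ are not invertible, so $f_{ts}\circ f_{st}$ is not the identity, and hence the basic backtracking homotopy (inserting or deleting $e\bar e$ along an adjacent edge) does not preserve the induced morphism. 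Since two paths with the same endpoints differ in general by backtrack insertions as well as by $2$-cell replacements, simple connectivity of the Tits complex gives you nothing. The completeness hypothesis is exactly what makes the conjecture nontrivial, and your scheme never uses it.

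The second, independent error is the localization of the difficulty to the Zamolodchikov ($A_3$) cell. The paper proves that the conjecture \emph{holds} for $w_{0,4}=121321$, the only element of $S_4$ whose rex graph contains a Zamolodchikov cycle; the coherence needed there is supplied by Elias's results on the Manin--Schechtman orientation (BS-consistency, $DUD_{\mathbf{x},\mathbf{y}}=UDU_{\mathbf{x},\mathbf{y}}$, $Z\circ\overline{Z}\circ Z=Z$) together with diagrammatic lemmas showing that every complete path contains a subpath equivalent to a direct one. The counterexample lives instead at $12321$, an element of length $5$ whose conflated expression graph is a line $\bullet\rightarrow\bullet\rightarrow\bullet$ with no cycles at all: the two offending complete paths traverse the same edges in a different order, and they are separated by evaluating the path morphisms on the element $1\otimes 1\otimes 1\otimes x_3\otimes 1\otimes 1$ of $B_1B_3B_2B_3B_1$. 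So your claim that for $x\neq w_0$ ``the argument above then already yields the conjecture'' fails precisely at the element where the conjecture is false, and the place where you expect the obstruction (the $A_3$ computation) is a place where the paper shows there is none.
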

	
	In this document we prove the conjecture for all but one element in $S_4$. The outstanding element is the one that sends 1 to 4, 2 to 2, 3 to 3, and 4 to 1. Viewed in the context of Coxeter systems with generators $s_i=(i\ i+1)$, its reduced expression graph is the following (we simplify notation writing $ijk$ in place of $s_is_js_k$). 
	
	\begin{figure}[H]
		\centering
		\tikzset{every picture/.style={line width=0.75pt}} 
		\begin{tikzpicture}[x=0.45pt,y=0.45pt,yscale=-1,xscale=1]
			
			\draw [line width=1.5]  
			(160,40) -- (190,10) ;
			\draw [line width=1.5]    (60,50) -- (95,50) ;
			\draw [line width=1.5]  
			(260,10) -- (290,40) ;
			\draw [line width=1.5]    (360,50) -- (395,50) ;
			\draw [line width=1.5]  
			(260,90) -- (290,60) ;
			\draw [line width=1.5]  
			(160,60) -- (190,90) ;
			
			\draw (0,40) node [anchor=north west][inner sep=0.75pt]  [font=\scriptsize]  {$12321$};
			\draw (200,0) node [anchor=north west][inner sep=0.75pt]  [font=\scriptsize]  {$13213$};
			\draw (300,40) node [anchor=north west][inner sep=0.75pt]  [font=\scriptsize]  {$31213$};
			\draw (100,40) node [anchor=north west][inner sep=0.75pt]  [font=\scriptsize]  {$13231$};
			\draw (400,40) node [anchor=north west][inner sep=0.75pt]  [font=\scriptsize]  {$32123$};
			\draw (200,85) node [anchor=north west][inner sep=0.75pt]  [font=\scriptsize]  {$31231$};
			
		\end{tikzpicture}
		\caption{Reduced expression graph of $12321$} 
	\end{figure}
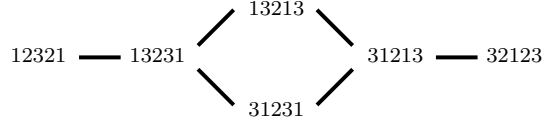
	
	The counterexample is given by $1 \otimes_{s_1} 1 \otimes_{s_3} 1 \otimes_{s_2}  x_3 \otimes_{s_3} 1 \otimes_{s_1} 1$, in the Bott-Samelson bimodule $B_{s_1}B_{s_3}B_{s_2}B_{s_3}B_{s_1}$. The complete paths inducing the different morphisms are the following.
	\begin{figure}[H]
		\centering
		\footnotesize{ $v_1:=13231 \rightarrow  31231 \rightarrow  31213 \rightarrow  32123 \rightarrow  31213 \rightarrow  13213 \rightarrow  13231 \rightarrow  12321 \rightarrow  13231$.}\\ 
		\vspace{0.2cm}
		\footnotesize{ $v_2:=13231 \rightarrow  12321 \rightarrow  13231 \rightarrow  31231 \rightarrow  31213 \rightarrow   32123 \rightarrow  31213 \rightarrow  13213 \rightarrow  13231$.}
	\end{figure}
	
	It might seem surprising to find a counterexample in a group of such a low rank, but we need to recall that there is an infinity of paths for each graph.\\
	
	Section \ref{sect2} contains background material, notations and conventions. In Section \ref{sectFPC} we prove the FPC for all but one element in $S_4$. In Section \ref{counterexample} we present a counterexample for the FPC with a diagrammatic verification. Finally, in Section \ref{generalization} we explain (without giving a proof) 
	how to generate a family of counterexamples. We also propose a refined conjecture for the longest element of any symmetric group.\\
	
	{\it Acknowledgments: I would like to thank my advisor Nicol\'as Libedinsky for posing this problem, for many helpful discussions and valuable comments, and I would like to thank Ben Elias, for helpful discussions and feedback.}\\ 	
	
	{\it This paper was partially funded by FONDECYT project number 1200061, and by ANID scholarship number 21171339.}
	
	
	\section{Background}\label{sect2}
	
	\subsection{Rex graphs}
	
	For $n \in \mathbb{N}$, let $(W,S)$ be the Coxeter system with $W=S_n$ the symmetric group on $\{1,\ldots,n\}$, and the set of generators $S=\{s_i\ |\ i=1, 2, \ldots , n-1\}$ where each $s_i$ is the transposition $(i\ i+1)$. They are also known as {\it simple reflections}. For $s,t \in S$, $m_{st}$ is the order of $st$ (it can be $2$, or $3$ if $s\neq t$). Let $l:W\rightarrow \mathbb{Z}_{\geq0}$, be the length function and $w_{0,n}$ the longest element of $W$. If no confusion is possible, we write $w_0$ instead of $w_{0,n}$.
	
	\begin{notation}
		When no confusion is possible, we denote by $i$ the simple reflection $s_i$.
	\end{notation}
	
	\begin{example}
		The longest elements in $S_n$ for $n=2,3,4$ are respectively $1, 121, 121321$. We can obtain $w_{0,n+1}$ inductively, by joining the sequence $n(n-1)\ldots 21$ on the right of $w_{0,n}$.
	\end{example}

	\begin{definition}\label{defrexgraph}
		The {\it reduced expression graph} of an element $w \in W$, usually abreviated {\it rex graph} and denoted by $Rex(w)$, is the graph defined as follows. Its vertices are the reduced expressions of $w$, with an edge between two reduced expressions if they differ by a single braid relation. These relations are $s_is_{i+1}s_i = s_{i+1}s_is_{i+1}$ for all $i\in \{1,2\ldots, n-2\}$ and $s_is_j = s_js_i$ when $|i-j|\geq2$. We call the edges determined by the former identity {\it adjacent edges}, and the ones determined by the latter, {\it distant edges}.
	\end{definition}
	
	\begin{example}
		The reduced expression graph of $21321$.
		\begin{figure}[H]
			\centering
			\tikzset{every picture/.style={line width=0.75pt}} 
			\begin{tikzpicture}[x=0.4pt,y=0.4pt,yscale=-1,xscale=1]
				
				\draw [line width=1.5]    (160,17.5) -- (190,17.5) ;
				\draw [line width=1.5]    (360,17.5) -- (390,17.5) ;
				\draw [line width=1.5]    (60,17.5) -- (90,17.5) ;
				\draw [line width=1.5]    (260,17.5) -- (290,17.5) ;
				
				\draw (0,10) node [anchor=north west][inner sep=0.75pt]  {\scriptsize $21321$};
				\draw (95,10) node [anchor=north west][inner sep=0.75pt]   {\scriptsize $23121$};
				\draw (295,10) node [anchor=north west][inner sep=0.75pt]  {\scriptsize $32312$};
				\draw (195,10) node [anchor=north west][inner sep=0.75pt]   {\scriptsize $23212$};
				\draw (395,10) node [anchor=north west][inner sep=0.75pt]  {\scriptsize $32132$};

			\end{tikzpicture}
			\caption{}
		\end{figure}
	\end{example}
	
	\begin{definition}
		Given a rex graph of $w\in W$, we can draw the distant edges with dashed lines. With this convention, we name this colored graph the {\it expanded expressions graph of $w$}. We symbolize it by $\widetilde{\Gamma}_w$.
	\end{definition}
	
	\begin{example}\label{expandedexpressiongraph}
		The expanded expressions graph of $12321$.
		\begin{figure}[H]
			\centering
			\tikzset{every picture/.style={line width=0.75pt}} 
			\begin{tikzpicture}[x=0.4pt,y=0.4pt,yscale=-1,xscale=1]
				
				\draw [line width=2.25]  [dash pattern={on 2.53pt off 3.02pt}]  (160,40) -- (190,10) ;
				\draw [line width=1.5]    (60,50) -- (95,50) ;
				\draw [line width=2.25]  [dash pattern={on 2.53pt off 3.02pt}]  (260,10) -- (290,40) ;
				\draw [line width=1.5]    (360,50) -- (395,50) ;
				\draw [line width=2.25]  [dash pattern={on 2.53pt off 3.02pt}]  (260,90) -- (290,60) ;
				\draw [line width=2.25]  [dash pattern={on 2.53pt off 3.02pt}]  (160,60) -- (190,90) ;
				
				\draw (0,40) node [anchor=north west][inner sep=0.75pt]    {\scriptsize $12321$};
				\draw (195,0) node [anchor=north west][inner sep=0.75pt]    {\scriptsize $13213$};
				\draw (300,40) node [anchor=north west][inner sep=0.75pt]  {\scriptsize $31213$};
				\draw (100,40) node [anchor=north west][inner sep=0.75pt]  {\scriptsize $13231$};
				\draw (400,40) node [anchor=north west][inner sep=0.75pt]  {\scriptsize $32123$};
				\draw (195,85) node [anchor=north west][inner sep=0.75pt]  {\scriptsize $31231$};
			\end{tikzpicture}
			\caption{}
		\end{figure}
	\end{example}
	
	\begin{example}\label{exampleZam}
		The expanded expressions graph of $w_{0,4}$.
		\begin{figure}[H]
			\centering
			\tikzset{every picture/.style={line width=0.75pt}} 
			\begin{tikzpicture}[x=0.45pt,y=0.45pt,yscale=-1,xscale=1]
				
				\draw [line width=2.25]    (355,20) -- (375,40) ;
				\draw [line width=2.25]    (400,65) -- (413.5,78.5) -- (420,85) ;
				\draw [line width=2.25]    (105,185) -- (118.5,198.5) -- (125,205) ;
				\draw [line width=2.25]    (150,230) -- (163.5,243.5) -- (170,250) ;
				\draw [line width=2.25]    (375,230) -- (355,250) ;
				\draw [line width=2.25]    (175,20) -- (155,40) ;
				\draw [line width=2.25]    (125.33,65.33) -- (105,85) ;
				\draw [line width=2.25]  [dash pattern={on 2.53pt off 3.02pt}]  (230,10) -- (300,10) ;
				\draw [line width=2.25]  [dash pattern={on 2.53pt off 3.02pt}]  (106,105) -- (125,125) ;
				\draw [line width=2.25]  [dash pattern={on 2.53pt off 3.02pt}]  (30,125) -- (51,104) ;
				\draw [line width=2.25]  [dash pattern={on 2.53pt off 3.02pt}]  (30,145) -- (49,165) ;
				\draw [line width=2.25]  [dash pattern={on 2.53pt off 3.02pt}]  (105,165) -- (126,144) ;
				\draw [line width=2.25]  [dash pattern={on 2.53pt off 3.02pt}]  (400,125) -- (421,104) ;
				\draw [line width=2.25]  [dash pattern={on 2.53pt off 3.02pt}]  (475,165) -- (496,144) ;
				\draw [line width=2.25]  [dash pattern={on 2.53pt off 3.02pt}]  (476,105) -- (495,125) ;
				\draw [line width=2.25]  [dash pattern={on 2.53pt off 3.02pt}]  (400,145) -- (419,165) ;
				\draw [line width=2.25]    (424,189) -- (404,209) ;
				\draw [line width=2.25]  [dash pattern={on 2.53pt off 3.02pt}]  (230,265) -- (300,265) ;
				
				\draw (165,0) node [anchor=north west][inner sep=0.75pt]   [align=left] {\scriptsize 121321};
				\draw (300,0) node [anchor=north west][inner sep=0.75pt]   [align=left] {\scriptsize123121};
				\draw (96,45) node [anchor=north west][inner sep=0.75pt]   [align=left] {\scriptsize212321};
				\draw (45,85) node [anchor=north west][inner sep=0.75pt]   [align=left] {\scriptsize213231};
				\draw (45,170) node [anchor=north west][inner sep=0.75pt]   [align=left] {\scriptsize231213};
				\draw (120,127) node [anchor=north west][inner sep=0.75pt]   [align=left] {\scriptsize231231};
				\draw (-30,127) node [anchor=north west][inner sep=0.75pt]   [align=left] {\scriptsize213213};
				\draw (95,210) node [anchor=north west][inner sep=0.75pt]   [align=left] {\scriptsize232123};
				\draw (160,256) node [anchor=north west][inner sep=0.75pt]   [align=left] {\scriptsize323123};
				\draw (375,45) node [anchor=north west][inner sep=0.75pt]   [align=left] {\scriptsize123212};
				\draw (420,85) node [anchor=north west][inner sep=0.75pt]   [align=left] {\scriptsize132312};
				\draw (340,127) node [anchor=north west][inner sep=0.75pt]   [align=left] {\scriptsize132132};
				\draw (495,127) node [anchor=north west][inner sep=0.75pt]   [align=left] {\scriptsize312312};
				\draw (420,170) node [anchor=north west][inner sep=0.75pt]   [align=left] {\scriptsize312132};
				\draw (370,210) node [anchor=north west][inner sep=0.75pt]   [align=left] {\scriptsize321232};
				\draw (302,256) node [anchor=north west][inner sep=0.75pt]   [align=left] {\scriptsize321323};
				
			\end{tikzpicture}
			\caption{}\label{exp3.5}
		\end{figure}
	\end{example}
	
	There are different kinds of cycles appearing in Figure \ref{exp3.5}. For instance, a square is formed between 213231 and 231213, because there are two {\it disjoint} distant moves connecting them. In other words, these movements can be applied in either order. Any square of this kind in any graph is called a {\it disjoint square}. A disjoint square can involve distant or adjacent edges. For example, there is a disjoint square of adjacent edges from 121343 to 212434. See Example \ref{disjointsquare}.
	
	\begin{example}\label{distantOctagon}
		The expanded expressions graph of $1214$, a {\it distant Octagon}.
		\begin{figure}[H]
			\centering		
			\tikzset{every picture/.style={line width=0.75pt}} 
			
			\begin{tikzpicture}[x=0.65pt,y=0.65pt,yscale=-1,xscale=1]
				
				\draw [line width=2.25]  [dash pattern={on 2.53pt off 3.02pt}]  (50,30) -- (90,30) ;
				\draw [line width=2.5]    (25,40) -- (25,70) ;
				\draw [line width=2.5]    (325,40) -- (325,70) ;
				\draw [line width=2.25]  [dash pattern={on 2.53pt off 3.02pt}]  (250,30) -- (290,30) ;
				\draw [line width=2.25]  [dash pattern={on 2.53pt off 3.02pt}]  (150,30) -- (190,30) ;
				\draw [line width=2.25]  [dash pattern={on 2.53pt off 3.02pt}]  (52,80) -- (92,80) ;
				\draw [line width=2.25]  [dash pattern={on 2.53pt off 3.02pt}]  (252,80) -- (292,80) ;
				\draw [line width=2.25]  [dash pattern={on 2.53pt off 3.02pt}]  (152,80) -- (192,80) ;
				
				\draw (5,22.4) node [anchor=north west][inner sep=0.75pt]  [font=\normalsize]  {$1214$};
				\draw (201,22.4) node [anchor=north west][inner sep=0.75pt]  [font=\normalsize]  {$1421$};
				\draw (301,22.4) node [anchor=north west][inner sep=0.75pt]  [font=\normalsize]  {$4121$};
				\draw (101,22.4) node [anchor=north west][inner sep=0.75pt]  [font=\normalsize]  {$1241$};
				\draw (6,72.4) node [anchor=north west][inner sep=0.75pt]  [font=\normalsize]  {$2124$};
				\draw (203,72.4) node [anchor=north west][inner sep=0.75pt]  [font=\normalsize]  {$2412$};
				\draw (303,72.4) node [anchor=north west][inner sep=0.75pt]  [font=\normalsize]  {$4212$};
				\draw (103,72.4) node [anchor=north west][inner sep=0.75pt]  [font=\normalsize]  {$2142$};
				
			\end{tikzpicture}
			\caption{$\widetilde{\Gamma}_{1214}$} 
		\end{figure}
	\end{example}
	
	\subsection{Bott-Samelson bimodules}\label{BSbimodules}
	
	Let $R$ be the polynomial ring over $\mathbb{R}$ in variables $x_1,\ldots, x_n$, together with an action of $W$ where $s_i$ permutes the variables $x_i$ and $x_{i+1}$. The ring $R$ is graded with $deg(x_i) = 2$. If $M = \bigoplus M^i$ is a graded $R$-module, then the grading shift convention will be $M(i)^j = M^{i+j}$. 
	For $s$ in $S$, we denote by $R^{s}$ the subring of $R$ consisting of polynomials invariant under the action of $s$. Let $B_s$ denote the graded $R$-bimodule $B_s := R \otimes_{R^s} R(1)$. The {\it Bott-Samelson bimodule} related to an expression $\underline{w}=(s, r\ldots, t)$, and denoted by $B_{\underline{w}}$, is the graded $R$-bimodule given by the tensor product of bimodules $B_{\underline{w}} = B_{s}\otimes_RB_{r}\otimes_R \ldots \otimes_R B_{t}$. Direct sums of shifts of Bott-Samelson bimodules form the full monoidal graded subcategory of $R$-bimodules denoted by $\mathbb{BS}$Bim. We simplify the notation by writing $B_{i}$ instead of $B_{s_i}$. For tensor products, we write $B_i B_j$ instead of $B_{s_i}\otimes_RB_{s_j}$.\\ 
	
	We refer to subsets of $S$ as {\it parabolic subsets}. Given such a subset, $J \subset S$, we let $R^J$ denote the subring of $R$ of polynomials invariant under all the simple reflections in $J$. The full (additive monoidal graded) subcategory of $R$-bimodules additively generated by all the shifts of direct summands of Bott-Samelson bimodules is the category $\mathbb{S}$Bim of Soergel bimodules. Soergel proved in \cite[Section 6]{soergel2007kazhdan}, that the isomorphism classes of indecomposable Soergel bimodules (up to grading shift) are parameterized by $W$. 
	The indecomposable bimodule $B_w$ appears as a summand inside $B_{s}B_{r} \ldots B_{t}$ in any reduced expression $sr\cdots t$ of $w$, and does not appear in any Bott-Samelson bimodule associated to any other element smaller than $w$ in the Bruhat order. 
	Let $B_J$ be the $R$-bimodule $B_J := R \otimes_{R^J} R$, and let $w_J$ be the longest element of the parabolic subgroup generated by $J$. It is possible to show that $B_{w_J}\cong B_J$ (see \cite[Theorem 1.4]{williamson2011singular}). Thus $B_J$ will appear as a summand of $B_{s}B_{r} \ldots B_{t}$  whenever $sr\cdots t$  is a reduced expression for $w_J$, the longest element of $J$.
	
	\subsection{Braid morphisms $f_{sr}$}\label{fsr}
	
	Consider the bimodules $X_{sr} = B_sB_rB_s\ldots$ and $X_{rs} = B_rB_sB_r\ldots$, each product having $m_{sr}$ terms. 
	We write $1^{\otimes}$ for $1\otimes 1 \otimes \ldots \otimes 1 \in R\otimes_{R^s} R \otimes_{R^r} \ldots \otimes_{R^t}R $. The morphism $f_{sr}$ is defined as the only degree 0 morphism from $X_{sr}$ to $X_{rs}$ sending $1^{\otimes}$ to $1^{\otimes}$.  
	We write $f_{s_is_j}$ as $f_{ij}$.\\ 
	
	We describe these maps in terms of certain generators (as an $(R,R)$-bimodule) of the corresponding Bott-Samelson bimodules.
	There are three cases to consider:\\ 
	
	{\bf First case:} If $|i-j|\geq 2$. The morphism $f_{ij} : B_iB_j \rightarrow B_jB_i$ is determined by the formula $f_{ij}(1^{\otimes}) = 1^{\otimes}$, because $1^{\otimes}$ generates $B_iB_j$ as a bimodule.\\
	
	{\bf Second case:} The morphism $f_{i(i+1)} : B_iB_{i+1}B_i \rightarrow B_{i+1}B_iB_{i+1}$ is determined by the formulae $f_{i(i+1)}(1^{\otimes}) = 1^{\otimes}$
	and
	\begin{equation}\label{eq1}
		f_{i(i+1)}(1{\otimes}x_i{\otimes}1{\otimes}1) = (x_i+x_{i+1}){\otimes}1{\otimes}1{\otimes}1 - 1{\otimes}1{\otimes}1{\otimes}x_{i+2}.
	\end{equation} 
	
	{\bf Third case:} The morphism $f_{i(i-1)} : B_iB_{i-1}B_i \rightarrow B_{i-1}B_iB_{i-1}$ is determined by the formulae $f_{i(i-1)}(1^{\otimes}) = 1^{\otimes}$
	and
	\begin{equation}\label{eq2}
		f_{i(i-1)}(1{\otimes}x_{i+1}{\otimes}1{\otimes}1) = 1{\otimes}1{\otimes}1{\otimes}(x_i+x_{i+1}) - x_{i-1}{\otimes}1{\otimes}1{\otimes}1.
	\end{equation}

	\subsection{Path morphisms}
		Let $G = (V, E, \varphi)$ be a graph. Here, $V$ denotes the set of vertices, $E$ denotes the set of edges, and $\varphi$ is the incidence function $\displaystyle \varphi :E\to \{\{x,y\}\mid x,y\in V\;{\textrm {and}}\;x\neq y\}$.
	
		\begin{definition}\label{path}
			Let $G = (V, E, \varphi)$ be a graph. A ${\it path}$ $p$ is a sequence of edges $(e_1, e_2, \ldots, e_{n-1})$ for which there is a sequence of vertices $[v_1, v_2,\ldots, v_n]$ such that $\varphi(e_i) = \{v_i, v_{i + 1}\}$ for $i = 1, 2, \ldots, n-1$. The sequence $[v_1, v_2,\ldots, v_n]$ is the \textit{vertex sequence} of the path. Note that it is possible to recover the edges of a path from its vertex sequence, so we will work with vertex sequences and paths indistinctly. For any path $p$ we denote by $[p]$ the associated sequence of vertices. We say that the \textit{length} of $p$ is $n$.
		\end{definition}
	
	\begin{definition}\label{semiorientation}
		We give a semi-orientation to the rex graph. We orient adjacent edges with the lexicographic order, so these edges go from $i(i + 1)i$ to $(i + 1)i(i + 1)$. The distant edges remain unoriented. When we speak of an {\it oriented path} in a semi-oriented graph, we refer to a path which may follow unoriented edges freely, but can only follow oriented edges along the orientation. A {\it reverse-oriented path} is a path oriented backwards. When we say {\it path} with no specification, we refer to any path. The starting point (vertex) and the ending point of a path $p$ will be referred as $p_a$ and $p_z$ respectively. Here, a {\it subpath} is a path that makes up part of a larger path.
	\end{definition}
	
		For a pair of Bott-Samelson bimodules $B, B'$ whose expressions differ by a single braid relation, we have a morphism of the type $$\mathrm{Id} \otimes \ldots \otimes f_{sr}\otimes \ldots \otimes \mathrm{Id} \in \mathrm{Hom}(B,B')$$ where $s$ and $r$ depend on the aforementioned braid relation. 
	\begin{example}
		In $S_4$, the expressions $212321$ and $213231$ are reduced expressions of the same element, and they differ by the braid relation $232=323$. The aforementioned morphism from $212321$ to $213231$ has the following form.
		\begin{equation*}
			\mathrm{Id}^2\otimes f_{23}\otimes \mathrm{Id} \colon B_{2} B_{1}(B_{2} B_{3} B_{2}) B_{1} \rightarrow B_{2} B_{1} (B_{3} B_{2} B_{3}) B_{1}.
		\end{equation*}
	\end{example}
	
	\begin{definition}
		For each path $p$ in the rex graph $Rex(w)$ we call $f(p)$ the associated morphism between the Bott-Samelson bimodules $B_{p_a}$ and $B_{p_z}$. We call $f(p)$ a {\it path morphism}.
	\end{definition}
	
	Note that for expressions related by distant edges (first case), the morphisms $f_{sr}$ are isomorphisms. We will see in Section \ref{distantEdgesIdentification} that the path morphism associated to a composition of distant edges only depends on the starting point and the ending point. In this way, we can collapse the dashed lines obtaining a new graph that we now define. 
	
	\begin{definition}\label{confgraph}
		The {\it conflated expression graph}, denoted by $\Gamma_w$, is the quotient of $\widetilde{\Gamma}_w$ (or $Rex(w)$) by all its distant edges. In other words, if $p$ is a path such that all its edges are distant, then we identify $p_a$ and $p_z$. We remark that there are no possible adjacent edges between $p_a$ and $p_z$ because the sum of the indices of a reduced expression remains unchanged when applied to a distant edge and varies when applied to an adjacent edge. 
		When identifying the vertices we must choose a representative, which usually will be a specific one depending on the path morphisms we are working with. When the representative is not explicit,  by convention, we will consider that it is the lower in the lexicographical order among the identified elements. We remark that there might be multiple edges between two vertices in this graph (see Example \ref{distantOctagon}), as opposed to the expanded expressions graph. Here we choose a representative following the same criteria, avoiding multigraphs.
	\end{definition}
		
		\begin{notation}\label{notation1}
			If $e$ is an edge (resp. $v$ is a vertex) of the expanded expressions graph we call $\pi(e)$ (resp. $\pi(v)$) its image in the conflated expression graph. In particular, if $e$ is a distant edge, $\pi(e)=\emptyset.$ For a sequence of edges $p=(e_1,\ldots, e_n)$ we denote by $\pi(p)$ the sequence $(\pi(e_1),\ldots, \pi(e_n))$ omitting $\pi(e_j)$ when it is empty. 
		\end{notation}
	
	\begin{example}\label{disjointsquare}
		The following figure is the conflated expression graph of 121343. This configuration is also known as {\it disjoint square}.
		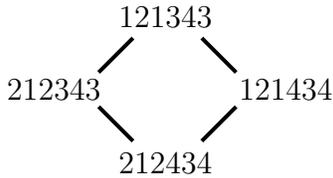
\begin{figure}[H]
			\centering
			\tikzset{every picture/.style={line width=0.75pt}} 
			\begin{tikzpicture}[x=0.65pt,y=0.65pt,yscale=-1,xscale=1]
				
				\draw [line width=1.5]    (50,60) -- (70,40) ;
				\draw [line width=1.5]    (110,100) -- (130,80) ;
				\draw [line width=1.5]    (110,40) -- (130,60) ;
				\draw [line width=1.5]    (50,80) -- (70,100) ;
				
				\draw (60,20) node [anchor=north west][inner sep=0.75pt]   [font=\normalsize]  {$121343$};
				\draw (-5,62) node [anchor=north west][inner sep=0.75pt]  [font=\normalsize] {$212343$};
				\draw (130,62) node [anchor=north west][inner sep=0.75pt]  [font=\normalsize]  {$121434$};
				\draw (60,105) node [anchor=north west][inner sep=0.75pt]  [font=\normalsize]  {$212434$};
				
			\end{tikzpicture}
			\caption{$\Gamma_{121343}$}	
		\end{figure}
	\end{example}
	
	\begin{example}
		The conflated expression graph for 12321 in $S_4$ has three vertices.
		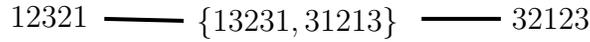
\begin{figure}[H]
			\centering
			\tikzset{every picture/.style={line width=0.75pt}}  
			\begin{tikzpicture}[x=0.75pt,y=0.75pt,yscale=-1,xscale=1]
				
				\draw [color={rgb, 255:red, 0; green, 0; blue, 0 }  ,draw opacity=1 ][line width=1.5]    (50,10) -- (90,11) ;
				\draw [color={rgb, 255:red, 0; green, 0; blue, 0 }  ,draw opacity=1 ][line width=1.5]    (210,10) -- (250,10) ;
				
				\draw (1,2.4) node [anchor=north west][inner sep=0.75pt]  [font=\normalsize]  {$12321$};
				\draw (95,2.4) node [anchor=north west][inner sep=0.75pt]  [font=\normalsize]  {$\{13231,31213\}$};
				\draw (254,3.4) node [anchor=north west][inner sep=0.75pt]  [font=\normalsize]  {$32123$};
				
			\end{tikzpicture}
			\caption{$\Gamma_{12321}$}\label{conflatedexpressiongraph}
		\end{figure}
	\end{example}
		
	\begin{example}
		The expanded expressions graph for 246 in $S_7$ and its conflated expression graph. A configuration like the first one is known as {\it distant hexagon}.
		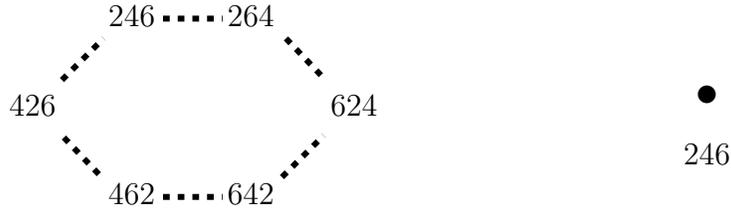
\begin{figure}[H]
			\centering
			\tikzset{every picture/.style={line width=0.75pt}} 
			\begin{tikzpicture}[x=0.75pt,y=0.75pt,yscale=-1,xscale=1]
				
				\draw [line width=2.25]  [dash pattern={on 2.53pt off 3.02pt}]  (110,20) -- (140,20) ;
				\draw [line width=2.25]  [dash pattern={on 2.53pt off 3.02pt}]  (172,30) -- (191,50) ;
				\draw [line width=2.25]  [dash pattern={on 2.53pt off 3.02pt}]  (110,110) -- (140,110) ;
				\draw [line width=2.25]  [dash pattern={on 2.53pt off 3.02pt}]  (59,51) -- (80,30) ;
				\draw [line width=2.25]  [dash pattern={on 2.53pt off 3.02pt}]  (60,80) -- (79,100) ;
				\draw [line width=2.25]  [dash pattern={on 2.53pt off 3.02pt}]  (170,100) -- (191,79) ;
				
				\draw (81,12) node [anchor=north west][inner sep=0.75pt]   [align=left] {246};
				\draw (141,12) node [anchor=north west][inner sep=0.75pt]   [align=left] {264};
				\draw (31,57) node [anchor=north west][inner sep=0.75pt]   [align=left] {426};
				\draw (81,102) node [anchor=north west][inner sep=0.75pt]   [align=left] {462};
				\draw (193,57) node [anchor=north west][inner sep=0.75pt]   [align=left] {624};
				\draw (141,102) node [anchor=north west][inner sep=0.75pt]   [align=left] {642};
				\draw (377,52.4) node [anchor=north west][inner sep=0.75pt] [font=\Large,color={rgb, 255:red, 0; green, 0; blue, 0 }  ,opacity=1 ]    {$\bullet $};
				\draw (371,82) node [anchor=north west][inner sep=0.75pt]   [align=left] {246};
				
			\end{tikzpicture}
			
			\caption{$\widetilde{\Gamma}_{246}$ and $\Gamma_{246}$}\label{DistantHexagon}
		\end{figure}
	\end{example}
	
	\begin{definition}\label{maninschechtman}
		Considering the semiorientation in Definition \ref{semiorientation}, and the quotient in Definition \ref{confgraph}, we obtain a proper orientation in $\Gamma_w$.
		This orientation is known as the {\it Manin-Schechtman orientation} \cite{manin1989arrangements}. 
	\end{definition}
	
	\begin{example}\label{exampleManinSchechtman}
		The conflated expression graph of 
		$w_{0,4}$ with the Manin-Schechtman orientation. We refer to this cycle in any of its forms (i.e. in its reduced, expanded, or conflated expression graph) as a {\it Zamolodchikov cycle}.
		
		\begin{figure}[H]
			\centering
			\tikzset{every picture/.style={line width=0.75pt}} 
			
			\begin{tikzpicture}[x=0.38pt,y=0.38pt,yscale=-1,xscale=1]
				
				\draw  [color={rgb, 255:red, 0; green, 0; blue, 0 }  ,draw opacity=1 ] (311,151) -- (272.92,242.92) -- (181,281) -- (89.08,242.92) -- (51,151) -- (89.08,59.08) -- (181,21) -- (272.92,59.08) -- cycle ;
				\draw [color={rgb, 255:red, 0; green, 0; blue, 0 }  ,draw opacity=1 ][line width=2]    (181,21) -- (89.08,59.08) ;
				\draw [shift={(125.43,44.02)}, rotate = 337.5] [fill={rgb, 255:red, 0; green, 0; blue, 0 }  ,fill opacity=1 ][line width=0.08]  [draw opacity=0] (16.97,-8.15) -- (0,0) -- (16.97,8.15) -- cycle    ;
				\draw [color={rgb, 255:red, 0; green, 0; blue, 0 }  ,draw opacity=1 ][line width=2]    (89.08,59.08) -- (51,151) ;
				\draw [shift={(66.06,114.65)}, rotate = 292.5] [fill={rgb, 255:red, 0; green, 0; blue, 0 }  ,fill opacity=1 ][line width=0.08]  [draw opacity=0] (16.97,-8.15) -- (0,0) -- (16.97,8.15) -- cycle    ;
				\draw [color={rgb, 255:red, 0; green, 0; blue, 0 }  ,draw opacity=1 ][line width=2]    (311,151) -- (272.92,242.92) ;
				\draw [shift={(287.98,206.57)}, rotate = 292.5] [fill={rgb, 255:red, 0; green, 0; blue, 0 }  ,fill opacity=1 ][line width=0.08]  [draw opacity=0] (16.97,-8.15) -- (0,0) -- (16.97,8.15) -- cycle    ;
				\draw [color={rgb, 255:red, 0; green, 0; blue, 0 }  ,draw opacity=1 ][line width=2]    (272.92,242.92) -- (181,281) ;
				\draw [shift={(217.35,265.94)}, rotate = 337.5] [fill={rgb, 255:red, 0; green, 0; blue, 0 }  ,fill opacity=1 ][line width=0.08]  [draw opacity=0] (16.97,-8.15) -- (0,0) -- (16.97,8.15) -- cycle    ;
				\draw [color={rgb, 255:red, 0; green, 0; blue, 0 }  ,draw opacity=1 ][line width=2]    (181,21) -- (272.92,59.08) ;
				\draw [shift={(236.57,44.02)}, rotate = 202.5] [fill={rgb, 255:red, 0; green, 0; blue, 0 }  ,fill opacity=1 ][line width=0.08]  [draw opacity=0] (16.97,-8.15) -- (0,0) -- (16.97,8.15) -- cycle    ;
				\draw [color={rgb, 255:red, 0; green, 0; blue, 0 }  ,draw opacity=1 ][line width=2]    (89.08,242.92) -- (181,281) ;
				\draw [shift={(144.65,265.94)}, rotate = 202.5] [fill={rgb, 255:red, 0; green, 0; blue, 0 }  ,fill opacity=1 ][line width=0.08]  [draw opacity=0] (16.97,-8.15) -- (0,0) -- (16.97,8.15) -- cycle    ;
				\draw [color={rgb, 255:red, 0; green, 0; blue, 0 }  ,draw opacity=1 ][line width=2]    (51,151) -- (89.08,242.92) ;
				\draw [shift={(74.02,206.57)}, rotate = 247.5] [fill={rgb, 255:red, 0; green, 0; blue, 0 }  ,fill opacity=1 ][line width=0.08]  [draw opacity=0] (16.97,-8.15) -- (0,0) -- (16.97,8.15) -- cycle    ;
				\draw [color={rgb, 255:red, 0; green, 0; blue, 0 }  ,draw opacity=1 ][line width=2]    (271.92,59.08) -- (310,151) ;
				\draw [shift={(294.94,114.65)}, rotate = 247.5] [fill={rgb, 255:red, 0; green, 0; blue, 0 }  ,fill opacity=1 ][line width=0.08]  [draw opacity=0] (16.97,-8.15) -- (0,0) -- (16.97,8.15) -- cycle    ;
				
				\draw (145,-8) node [anchor=north west][inner sep=0.75pt]  [font=\scriptsize]  {$121321$};
				\draw (282,40) node [anchor=north west][inner sep=0.75pt]  [font=\scriptsize]  {$123212$};
				\draw (10,40) node [anchor=north west][inner sep=0.75pt]  [font=\scriptsize]  {$212321$};
				\draw (-25,150) node [anchor=north west][inner sep=0.75pt]  [font=\scriptsize]  {$213231$};
				\draw (317,150) node [anchor=north west][inner sep=0.75pt]  [font=\scriptsize]  {$132312$};
				\draw (10,250) node [anchor=north west][inner sep=0.75pt]  [font=\scriptsize]  {$232123$};
				\draw (282,250) node [anchor=north west][inner sep=0.75pt]  [font=\scriptsize]  {$321232$};
				\draw (145,290) node [anchor=north west][inner sep=0.75pt]  [font=\scriptsize]  {$323123$};
				\draw (168,8) node [anchor=north west][inner sep=0.75pt]  [font=\Large,color={rgb, 255:red, 0; green, 0; blue, 0 }  ,opacity=1 ]  {$\bullet $};
				\draw (294,138) node [anchor=north west][inner sep=0.75pt]  [font=\Large,color={rgb, 255:red, 0; green, 0; blue, 0 }  ,opacity=1 ]  {$\bullet  $};
				\draw (76,46) node [anchor=north west][inner sep=0.75pt]  [font=\Large]  {$\bullet  $};
				\draw (40,138) node [anchor=north west][inner sep=0.75pt]  [font=\Large,color={rgb, 255:red, 0; green, 0; blue, 0 }  ,opacity=1 ]  {$\bullet $};
				\draw (258,46) node [anchor=north west][inner sep=0.75pt]  [font=\Large]  {$\bullet $};
				\draw (74,228) node [anchor=north west][inner sep=0.75pt]  [font=\Large]  {$\bullet $};
				\draw (168,266) node [anchor=north west][inner sep=0.75pt]  [font=\Large,color={rgb, 255:red, 0; green, 0; blue, 0 }  ,opacity=1 ]  {$\bullet $};
				\draw (260,228) node [anchor=north west][inner sep=0.75pt]  [font=\Large]  {$\bullet $};
			\end{tikzpicture}
			\caption{Manin-Schechtman oriented Zamolodchikov cycle.}\label{confZamCycle}
		\end{figure}
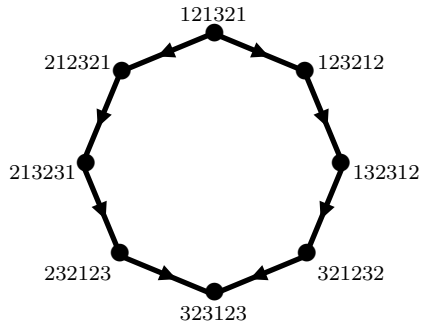
	\end{example}
	
	For any $w\in S_n$, the Manin-Schechtman orientation determines a unique source and a unique sink in $\Gamma_w$. 
	We refer to them as $\bf s$ and $\bf t$ respectively. In \cite[Section 3]{elias2016thicker}, it is proven that the Manin-Schechtman orientation satisfies the following properties.
	
	\begin{enumerate}
		\item \label{BS-Consistency} It is {\it BS-consistent} or {\it consistent with Bott-Samelson bimodules} (\cite[Prop. 3.10]{elias2016thicker}). This means that for any pair of oriented (or reverse-oriented) paths $p$ and $q$, with 
		$p_a=q_a$ and $p_z=q_z$, we have $f(p)=f(q)$.
		\item For $w_{0,n}$, the orientation is said to be {\it idempotent-magical}. This means that the morphism associated to an oriented path from $\bf s$ to $\bf t$ composed with the morphism associated to a reverse-oriented path from $\bf t$ to $\bf s$ is an idempotent.
	\end{enumerate}
	
	A {\it complete path} in a graph is a path passing through every vertex of the graph at least once. 
	Recall from the introduction the Forking Path Conjecture.
	
	\begin{conjecture*}
		Let $w \in S_n$, and let $p, q$ be two complete paths in $Rex(w)$, with $p_a=q_a$ and $p_z=q_z$. Then $f(p)=f(q)$.
	\end{conjecture*}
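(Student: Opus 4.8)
The plan is to reduce an arbitrary complete path to a canonical representative through a sequence of morphism-preserving local moves, and then to compare canonical forms using BS-consistency. \textbf{Step 1 (passage to the conflated graph).} Since the braid morphism attached to a distant edge is an isomorphism and, as established in Section \ref{distantEdgesIdentification}, the morphism induced by any string of distant edges depends only on its endpoints, I would first replace each path $p$ by its image $\pi(p)$ in $\Gamma_w$. After identifying $B_{p_a}$ and $B_{p_z}$ with the chosen representatives via the canonical distant isomorphisms, one gets $f(p)=f(\pi(p))$, so it suffices to prove the statement for complete paths in $\Gamma_w$, where every edge is an adjacent edge carrying the Manin-Schechtman orientation.

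\textbf{Step 2 (monotone segments).} Inside $\Gamma_w$, property (\ref{BS-Consistency}) already disposes of the oriented and reverse-oriented cases: any two oriented paths with equal endpoints induce the same morphism, and likewise for reverse-oriented paths. The two consequences I intend to use repeatedly are that a monotone detour may be rerouted through any other monotone route with the same endpoints, and that the unique source $\mathbf{s}$ and sink $\mathbf{t}$ are reachable monotonically from, and to, every vertex. \textbf{Step 3 (reversals and the backtracking morphism).} A general complete path is not monotone: because it must visit every vertex, it is forced to switch repeatedly between following and reversing the orientation. The heart of the argument is to control the composite obtained by crossing an adjacent edge forward and then backward. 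Writing $e_{sr}:=f_{rs}\circ f_{sr}$ for the round trip on an edge of type $srs\to rsr\to srs$, this is the operator I must understand. Invoking Libedinsky's theorem that complete-path morphisms are idempotents \cite{Libedinsky2011new}, together with Elias' cycle idempotent \cite{elias2016thicker}, I would try to show that once a complete path has already visited a neighborhood of a vertex, each further round trip there acts as a fixed operator that is absorbed by the remainder of the path, so that inserting or deleting such a backtrack leaves $f(p)$ unchanged.

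\textbf{Step 4 (normal form and conclusion).} Combining the rerouting move of Step 2 with the backtrack-absorption of Step 3, I would push every complete path into a fixed normal form: travel monotonically from $p_a$ down to $\mathbf{t}$, then monotonically up to $\mathbf{s}$, sweeping in any still-unvisited vertices by minimal monotone detours in a prescribed order, and finally descend monotonically to $p_z$. Two complete paths sharing endpoints would then reduce to the same normal form, yielding $f(p)=f(q)$.

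\textbf{Main obstacle.} The delicate point, and the step I expect to carry all the weight, is Step 3. The round-trip morphisms $e_{sr}$ are emphatically not the identity: since $B_sB_rB_s\cong B_{srs}\oplus B_s$, the composite $f_{rs}\circ f_{sr}$ need not act as the identity on the $B_s$ summand. Consequently the \emph{order} in which these operators are deposited along a complete path is irrelevant only if the deposited operators commute, and the entire conclusion of Step 4 rests on exactly this commutation. The real content of the conjecture is therefore the claim that the idempotent-like operators accumulated by a complete path commute regardless of the route taken; if two complete paths with the same endpoints deposit such operators in orders realizing non-commuting composites, the normal-form reduction collapses and $f(p)=f(q)$ may fail. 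For this reason I would first test the whole program against $\Gamma_{12321}$ in $S_4$, the smallest graph where a single adjacent edge coexists with enough structure to reverse direction, before committing to the general reduction.
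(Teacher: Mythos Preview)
Your proposal attempts to prove a statement that the paper shows to be \emph{false}. The Forking Path Conjecture is disproved in Section~\ref{counterexample}: for $w=12321\in S_4$, the paper exhibits two explicit complete paths $v_1,v_2$ in $Rex(w)$ with the same endpoints but $f(v_1)\neq f(v_2)$, detected by evaluating both on $1\otimes 1\otimes 1\otimes x_3\otimes 1\otimes 1\in B_1B_3B_2B_3B_1$. So no proof strategy can succeed, and the question is only where yours breaks.

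The failure is precisely at your Step~3, and you diagnosed it yourself. The ``backtrack-absorption'' you need is the assertion that the round-trip operators $e_{sr}=f_{rs}\circ f_{sr}$, once tensored into a larger Bott--Samelson, commute with each other and with the rest of the path morphism. They do not. In $\Gamma_{12321}$ the conflated graph is the line $\mathbf{s}\to c\to\mathbf{t}$, and the paper's paths $v_1,v_2$ both project to complete paths starting and ending at $c$; they differ only in whether one first bounces off $\mathbf{s}$ or off $\mathbf{t}$. Your normal-form reduction in Step~4 would force both into the same representative, contradicting $f(v_1)\neq f(v_2)$; hence the rewriting moves of Step~3 cannot all be morphism-preserving. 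Concretely, the composite $[c,\mathbf{s},c,\mathbf{t},c,\ldots]$ is not equivalent to $[c,\mathbf{t},c,\mathbf{s},c,\ldots]$, exactly the non-commutation you flagged as the main obstacle. Your closing suggestion to test the program on $\Gamma_{12321}$ was the right instinct: that test fails, and the paper carries it out explicitly. What \emph{does} survive is a restricted version for $w_{0,n}$ (see Section~\ref{sectFPC} and the refined conjecture in Section~\ref{generalization}), where one additionally requires the paths to pass through both $\mathbf{s}$ and $\mathbf{t}$; there the $UDU=DUD$ identity of Theorem~\ref{teo3.18} supplies the absorption your Step~3 needs.
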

	
	\subsection{Conflated expression graph of the longest element}
	
	We now restrict our attention to the graph $\Gamma = \Gamma_{w_{0,n}}$ with the Manin-Schechtman orientation. 
	
	\begin{definition}\label{def2.5}
		For $\mathbf{x}, \mathbf{y} \in \Gamma$, we denote $\mathbf{x}\searrow \mathbf{y}$ (resp. $\mathbf{y} \nearrow \mathbf{x}$) for some oriented (resp. reverse-oriented) path from $\mathbf{x}$ to $\mathbf{y}$ (resp. $\mathbf{y}$ to $\mathbf{x}$), presuming that one exists. We use $f_{\mathbf{x}\searrow \mathbf{y}}$ and $f_{\mathbf{y}\nearrow \mathbf{x}}$ for the induced path morphisms, which do not depend on the choice of oriented path by the $BS$-consistency.
	\end{definition}
The following is Proposition 3.16 in \cite{elias2016thicker}.
	
	\begin{proposition}
		There is a unique source $\mathbf{s}$, and a unique sink $\mathbf{t}$ in $\Gamma$. Let $m$ be the length of the shortest (not necessarily oriented) path from $\mathbf{s}$ to $\mathbf{t}$. Then every vertex lies on some oriented path $\mathbf{s} \searrow \mathbf{t}$ of length $m$, and every oriented path $\mathbf{x}\searrow \mathbf{y}$ can be extended to a length $m$ path $\mathbf{s} \searrow \mathbf{x} \searrow \mathbf{y} \searrow \mathbf{t}$.
	\end{proposition}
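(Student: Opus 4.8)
The plan is to translate the oriented graph $\Gamma = \Gamma_{w_{0,n}}$ into a purely combinatorial ranked poset and to read off all three assertions from a single rank function. First I would fix the wiring-diagram description of a commutation class: a vertex of $\Gamma$ is a reduced word of $w_{0}$ up to the distant (commutation) moves, equivalently a simple arrangement of $n$ strands in which every pair of strands crosses exactly once. For a triple of values $a<b<c$ the three crossings $\{a,b\}$, $\{a,c\}$, $\{b,c\}$ always occur with $\{a,c\}$ in the middle, so the only freedom is whether $\{a,b\}$ precedes $\{b,c\}$ or conversely. Recording this choice defines a sign function $\beta\colon \binom{[n]}{3}\to\{+,-\}$, and the realizable sign functions are exactly the elements of the second higher Bruhat order of Manin and Schechtman \cite{manin1989arrangements}. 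An adjacent edge $i(i+1)i\to(i+1)i(i+1)$ alters the local order of exactly three consecutive crossings, hence flips $\beta$ at a single triple and leaves every other triple untouched; with the convention matching the semi-orientation, each oriented edge flips one entry toward the sink.

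Second, I would introduce the rank $\rho(\beta)$ equal to the number of triples already agreeing with the sink value. By the previous paragraph every oriented edge raises $\rho$ by exactly $1$, so $\Gamma$ is acyclic and any oriented path $\mathbf{x}\searrow\mathbf{y}$ has length $\rho(\mathbf{y})-\rho(\mathbf{x})$. The two constant sign functions are realizable; one of them is the canonical expression $121321\cdots$ built recursively as in the Example, giving a vertex $\mathbf{s}$ with $\rho=0$ (a source, since nothing can flip down into it) and, dually, a vertex $\mathbf{t}$ with $\rho=\binom{n}{3}$ (a sink). This already determines $m$: any walk from $\mathbf{s}$ to $\mathbf{t}$ in the underlying graph changes $\rho$ by the net amount $\binom{n}{3}$ while each edge changes it by $\pm1$, so the walk has at least $\binom{n}{3}$ edges, with equality precisely for oriented paths. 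Hence $m=\binom{n}{3}$, and the shortest walks $\mathbf{s}\to\mathbf{t}$ are exactly the oriented paths $\mathbf{s}\searrow\mathbf{t}$.

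The heart of the argument is the structural lemma that this poset is graded with $\mathbf{s}$ and $\mathbf{t}$ as its unique minimum and maximum: every $\beta\neq\mathbf{t}$ admits a valid upward flip, and every $\beta\neq\mathbf{s}$ a valid downward flip. Granting this, uniqueness of the source and sink is immediate, since a source is a local minimum of the poset and therefore equals $\mathbf{s}$, and dually for $\mathbf{t}$; moreover every vertex then lies on a saturated oriented chain $\mathbf{s}\searrow\mathbf{t}$. I expect this lemma to be the main obstacle, because an individual triple carrying the wrong sign need not be immediately flippable: one must exhibit at least one triple that forms a flippable triangle of the wiring diagram and whose flip moves $\beta$ toward $\mathbf{t}$. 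I would establish it either by invoking directly the gradedness of the higher Bruhat order \cite{manin1989arrangements}, or self-containedly by induction on $n$ via the strand-deletion map that erases wire $n$: a flip of a triple avoiding $n$ is furnished by the inductive hypothesis applied to the $(n-1)$-strand subdiagram, while the triples containing $n$ are treated by pushing the crossings of wire $n$ past the remaining strands.

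Finally I would assemble the proposition from the rank function together with the lemma. For the statement that every vertex lies on a length-$m$ oriented path, I take any vertex $\beta$ and concatenate a saturated chain $\mathbf{s}\searrow\beta$ with a saturated chain $\beta\searrow\mathbf{t}$; this is an oriented path through $\beta$ of total length $\rho(\mathbf{t})-\rho(\mathbf{s})=\binom{n}{3}=m$. For the extension statement, given an oriented path $\mathbf{x}\searrow\mathbf{y}$ I prepend a saturated chain $\mathbf{s}\searrow\mathbf{x}$ and append one $\mathbf{y}\searrow\mathbf{t}$, both of which exist by the lemma; since every oriented edge raises $\rho$ by $1$, the concatenation $\mathbf{s}\searrow\mathbf{x}\searrow\mathbf{y}\searrow\mathbf{t}$ is an oriented path of length $\binom{n}{3}=m$, which is the required extension.
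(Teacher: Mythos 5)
The paper does not prove this proposition at all: it is quoted verbatim as Proposition~3.16 of \cite{elias2016thicker}, so there is no in-paper argument to compare against. Your route --- identifying vertices of $\Gamma_{w_{0,n}}$ with commutation classes, hence with elements of the second higher Bruhat order $B(n,2)$ via the sign function on triples, observing that each adjacent move flips exactly one triple, and reading off everything from the rank $\rho$ --- is essentially the argument that Elias himself runs in Section~3 of that paper, which in turn rests on \cite{manin1989arrangements}. The rank bookkeeping in your second and fourth paragraphs is correct and does yield $m=\binom{n}{3}$, the characterization of shortest walks as oriented paths, and both existence statements, \emph{given} the structural lemma. Two points deserve more care than you give them. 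First, the claim that every oriented edge flips its triple ``toward the sink'' is not purely a convention: one must check that whenever a subword $i(i+1)i$ is braid-movable, the three wires involved sit at levels $i<i+1<i+2$ in increasing numerical order (which holds because none of the three pairs has crossed yet), so that the lexicographic orientation uniformly converts the packet from lex to reverse-lex order; this is a short verification but it is where the well-definedness of $\rho$ as a strictly increasing function along oriented edges actually lives. Second, as you acknowledge, the entire difficulty is concentrated in the gradedness statement that every $\beta\neq\mathbf{t}$ admits an ascending flip and every $\beta\neq\mathbf{s}$ a descending one (together with the injectivity of the commutation-class-to-sign-function map, which you use silently to conclude uniqueness of the source and sink); citing \cite{manin1989arrangements} for this is legitimate and makes your argument a correct reduction, but your sketched strand-deletion induction is too thin to count as a self-contained proof of that lemma.
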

	
	\begin{definition}
		For $\mathbf{x}, \mathbf{y}\in \Gamma$, let $DUD_{\mathbf{x}, \mathbf{y}} = f_{\mathbf{s}\searrow \mathbf{y}} \circ f_{\mathbf{t}\nearrow \mathbf{s}} \circ f_{\mathbf{x}\searrow \mathbf{t}}$. That is, $DUD_{\mathbf{x}, \mathbf{y}}$ corresponds to any oriented path which goes from $\mathbf{x}$ down to the sink, up to the source, and down to $\mathbf{y}$. Let $UDU_{\mathbf{x}, \mathbf{y}} = f_{\mathbf{t}\nearrow \mathbf{y}} \circ f_{\mathbf{s}\searrow \mathbf{t}} \circ f_{\mathbf{x}\nearrow \mathbf{s}}$ corresponds to any path which goes from $\mathbf{x}$ up to the source, down to the sink, and up to $\mathbf{y}$.
	\end{definition}
	
	\begin{theorem} \cite[Theorem 3.18]{elias2016thicker}\label{teo3.18}
		For all $\mathbf{x}, \mathbf{y}\in \Gamma$, we have $$DUD_{\mathbf{x}, \mathbf{y}} = UDU_{\mathbf{x}, \mathbf{y}}.$$ Its image is the indecomposable object $B_{w_0}$ corresponding to the longest element of $S_n$.
	\end{theorem}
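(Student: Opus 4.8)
The plan is to isolate the single non-formal fact that drives the statement---that the source-to-sink morphism collapses onto the top summand---and then to deduce both the equality $DUD_{\mathbf{x},\mathbf{y}}=UDU_{\mathbf{x},\mathbf{y}}$ and the identification of the image by a short formal computation. Write $B_{w_0}$ for the indecomposable top summand (the bimodule $B_S=R\otimes_{R^{w_0}}R$ of Subsection \ref{BSbimodules}), which by Soergel's classification \cite{soergel2007kazhdan} occurs with multiplicity one in $B_{\mathbf{x}}$ for every vertex $\mathbf{x}\in\Gamma$. For each such $\mathbf{x}$ fix the degree-$0$ split projection $\theta_{\mathbf{x}}\colon B_{\mathbf{x}}\to B_{w_0}$ and split inclusion $\iota_{\mathbf{x}}\colon B_{w_0}\to B_{\mathbf{x}}$ onto this summand, normalized by $\theta_{\mathbf{x}}(1^{\otimes})=1\otimes 1$ and $\theta_{\mathbf{x}}\circ\iota_{\mathbf{x}}=\mathrm{id}_{B_{w_0}}$, and chosen dual to one another under the self-duality of $\mathbb{BS}$Bim.

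First I would record two top-compatibility identities, valid for an arbitrary path $p\colon\mathbf{x}\to\mathbf{y}$ (oriented or not). Each generator $f_{sr}$ fixes $1^{\otimes}$ (Subsection \ref{fsr}), so the composite $f(p)$ fixes $1^{\otimes}$; since $1^{\otimes}$ generates $B_{\mathbf{x}}$ as a bimodule and both $\theta_{\mathbf{x}},\theta_{\mathbf{y}}$ send $1^{\otimes}$ to $1\otimes1$, comparing images of the generator gives
\begin{equation*}
\theta_{\mathbf{y}}\circ f(p)=\theta_{\mathbf{x}}. \tag{P}
\end{equation*}
Dually---using the self-duality of $\mathbb{BS}$Bim, under which projections and inclusions correspond and each $f_{sr}$ is carried to the reverse braid morphism---one obtains the companion identity
\begin{equation*}
f(p)\circ\iota_{\mathbf{x}}=\iota_{\mathbf{y}}. \tag{I}
\end{equation*}
I stress that (P) and (I) constrain only the top component of $f(p)$: they are silent about the summands below $B_{w_0}$, which is exactly where the morphisms of two complete paths are free to differ, and where the counterexample of Section \ref{counterexample} operates.

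The crux of the argument is the claim that the source-to-sink morphism is the canonical rank-one map through the top,
\begin{equation*}
f_{\mathbf{s}\searrow\mathbf{t}}=\iota_{\mathbf{t}}\circ\theta_{\mathbf{s}}, \qquad\text{and, dually,}\qquad f_{\mathbf{t}\nearrow\mathbf{s}}=\iota_{\mathbf{s}}\circ\theta_{\mathbf{t}}. \tag{$\ast$}
\end{equation*}
Granting $(\ast)$, the theorem is immediate. Substituting $f_{\mathbf{t}\nearrow\mathbf{s}}=\iota_{\mathbf{s}}\circ\theta_{\mathbf{t}}$ for the middle factor and then applying (I) and (P),
\begin{equation*}
\begin{aligned}
DUD_{\mathbf{x},\mathbf{y}} &= f_{\mathbf{s}\searrow\mathbf{y}}\circ(\iota_{\mathbf{s}}\circ\theta_{\mathbf{t}})\circ f_{\mathbf{x}\searrow\mathbf{t}} \\
&= (f_{\mathbf{s}\searrow\mathbf{y}}\circ\iota_{\mathbf{s}})\circ(\theta_{\mathbf{t}}\circ f_{\mathbf{x}\searrow\mathbf{t}}) = \iota_{\mathbf{y}}\circ\theta_{\mathbf{x}},
\end{aligned}
\end{equation*}
and, substituting $f_{\mathbf{s}\searrow\mathbf{t}}=\iota_{\mathbf{t}}\circ\theta_{\mathbf{s}}$, symmetrically $UDU_{\mathbf{x},\mathbf{y}}=f_{\mathbf{t}\nearrow\mathbf{y}}\circ(\iota_{\mathbf{t}}\circ\theta_{\mathbf{s}})\circ f_{\mathbf{x}\nearrow\mathbf{s}}=\iota_{\mathbf{y}}\circ\theta_{\mathbf{x}}$. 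Hence $DUD_{\mathbf{x},\mathbf{y}}=UDU_{\mathbf{x},\mathbf{y}}=\iota_{\mathbf{y}}\circ\theta_{\mathbf{x}}$, whose image is $\iota_{\mathbf{y}}(B_{w_0})\cong B_{w_0}$, the indecomposable bimodule of $w_0$.

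The main obstacle is therefore $(\ast)$ itself: proving that descending from the source all the way to the sink annihilates every constituent below $B_{w_0}$. The soft tools give only part of this. Writing $N=f_{\mathbf{s}\searrow\mathbf{t}}-\iota_{\mathbf{t}}\circ\theta_{\mathbf{s}}$, identities (P) and (I) force $\theta_{\mathbf{t}}\circ N=0$ and $N\circ\iota_{\mathbf{s}}=0$, so $N$ is a degree-$0$ map supported away from the top; by positivity of the grading its only possibly nonzero components pair up constituents $B_z(a)$ with $z<w_0$ occurring at the same shift in both $B_{\mathbf{s}}$ and $B_{\mathbf{t}}$. When source and sink share no such matched lower constituent (as happens in low rank, e.g.\ already in $S_3$, where $B_{\mathbf{s}}$ and $B_{\mathbf{t}}$ meet only in $B_{w_0}$), $N=0$ is automatic and $(\ast)$ follows formally. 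In general, however, this vanishing is \emph{not} forced by the listed properties---indeed the idempotency of $f_{\mathbf{s}\searrow\mathbf{t}}\circ f_{\mathbf{t}\nearrow\mathbf{s}}$ only forces the associated correction term to be idempotent, not zero---so establishing $(\ast)$ requires the explicit structure of the source-to-sink morphism. I would obtain it from the diagrammatic calculus of \cite{elias2016thicker}, realizing $f_{\mathbf{s}\searrow\mathbf{t}}$ as the relevant Zamolodchikov projector and checking that the adjacent braid moves along a shortest oriented path $\mathbf{s}\searrow\mathbf{t}$ successively kill the lower constituents. This lowest-degree diagrammatic computation is the one genuinely non-formal ingredient of the proof.
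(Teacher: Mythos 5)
The paper does not prove this statement: it is imported wholesale from \cite[Theorem 3.18]{elias2016thicker}, so the only proof to measure yours against is Elias's. Your formal skeleton is correct. The identity (P) follows exactly as you say (every $f_{sr}$ fixes $1^{\otimes}$, $1^{\otimes}$ generates the Bott--Samelson bimodule, and the split projection to the top summand can be normalized by $1^{\otimes}\mapsto 1\otimes 1$); (I) follows by duality since the duality exchanges $f_{sr}$ with $f_{rs}$ and, with compatible choices, $\theta_{\mathbf{x}}$ with $\iota_{\mathbf{x}}$; and granting $(\ast)$ the two-line computation $DUD_{\mathbf{x},\mathbf{y}}=UDU_{\mathbf{x},\mathbf{y}}=\iota_{\mathbf{y}}\circ\theta_{\mathbf{x}}$, with image $B_{w_0}$, is valid.

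The genuine gap is $(\ast)$, and it is not a peripheral one. Combined with (P) and (I), the assertion $f_{\mathbf{s}\searrow\mathbf{t}}=\iota_{\mathbf{t}}\circ\theta_{\mathbf{s}}$ is \emph{equivalent} to the statement that the image of $Z=f_{\mathbf{s}\searrow\mathbf{t}}$ is exactly the top summand, i.e.\ to the case $\mathbf{x}=\mathbf{s}$, $\mathbf{y}=\mathbf{t}$ of the theorem you are proving; so what you have is an honest reduction of the general case to that special case, with all of the content deferred. Your proposed route to $(\ast)$ --- ``the adjacent braid moves along a shortest oriented path successively kill the lower constituents'' --- is not yet an argument: a single oriented move $\mathrm{Id}\otimes f_{i(i\pm1)}\otimes\mathrm{Id}$ only annihilates the lower summand of the local factor $B_iB_{i\pm1}B_i$, and tracking how the lower constituents of the full tensor product (which do not sit inside any single local factor) die along the composite is precisely the hard part. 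As you correctly observe, the soft constraints only force the error term $N$ to vanish when $B_{\mathbf{s}}$ and $B_{\mathbf{t}}$ share no lower summand in matching degrees, which one cannot rely on beyond small rank. Establishing $(\ast)$ in general is what Elias's thick calculus is for: he factors the source-to-sink morphism through the object $B_J$ with $J=S$ (the $B_{w_J}\cong B_J$ of Subsection \ref{BSbimodules}) using explicit thick generators and an induction on rank, and this construction, not the bookkeeping around it, is the proof of the theorem. Identifying $(\ast)$ as the crux is the right instinct, but as written the proposal proves only the formal shell.
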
 
	
	\begin{definition}\label{Zeta}
		Let $Z= f_{\mathbf{s}\searrow \mathbf{t}}$ denote the unique oriented path morphism from source to sink. Let $\overline{Z}= f_{\mathbf{t}\nearrow \mathbf{s}}$ denote the unique reverse-oriented path morphism from sink to source.
	\end{definition}
	
	Note that $DUD_{\mathbf{t},\mathbf{s}} = \overline{Z}$, $UDU_{\mathbf{s}, \mathbf{t}}= Z$, and $DUD_{\mathbf{s},\mathbf{s}} = UDU_{\mathbf{s}, \mathbf{s}} = \overline{Z}\circ Z$. Also, note that considering $\mathbf{x} = \mathbf{s}$ and $\mathbf{y}=\mathbf{t}$, Theorem \ref{teo3.18} says that
	\begin{equation}\label{ZZZ}
		Z\circ \overline{Z}\circ Z = Z.
	\end{equation} 
	Analogously, we have
	\begin{equation}\label{ZZZ2}
		\overline{Z}\circ Z\circ \overline{Z} =  \overline{Z}.
	\end{equation} 
	
	
	\section{Forking Path Conjecture in $S_4$}\label{sectFPC}
	
	\subsection{Distant edges identification}\label{distantEdgesIdentification}

		If $\underline{w}\in \Gamma_w$ is a vertex in the conflated expression graph, the set $\pi^{-1}(\underline{w})\in \widetilde{\Gamma}_w$ is called a \textit{cloud.} If $C$ is a cloud, then by definition, every two vertices in $C$ are connected by a sequence of distant edges. If we consider the statistic $N(\underline{w})$ given by adding all the indexes of the reduced expression (for example $N(s_1s_3s_2)=1+3+2=6$) we can see that the function $N$ is constant in the vertices of a cloud.
		
		\begin{definition}\label{conflatedmorphism}
			Consider any $w\in S_n$. Let $p$ be a path in the conflated expression graph $\Gamma_w$. The path morphism $f(p)$ defined by $p$ is $f(\tilde{p})$, where $\tilde{p}=(e_1, e_2, \ldots, e_{n-1})$ is any path in the expanded expressions graph $\widetilde{\Gamma}_w$ with $\tilde{p}_a=p_a$, $\tilde{p}_z=p_z,$ and such that one obtains $p$ from $\tilde{p}$ by applying $\pi$ (see Notation \ref{notation1}) to this sequence.  
		\end{definition}
		
		\begin{proposition}
			Path morphisms in conflated expression graphs are well-defined. In other words, given two paths $\tilde{p},\ \tilde{p}'$ in $\widetilde{\Gamma}_w$ satisfying the conditions in Definition $\ref{conflatedmorphism}$, we have $f(\tilde{p}) = f(\tilde{p}')$.
		\end{proposition}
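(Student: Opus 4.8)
The plan is to decompose each lift into distant blocks separated by adjacent edges and to reduce the statement to two coherence facts: one internal to a single cloud, and one describing how an adjacent edge interacts with distant edges. Write $\tilde p = D_0 A_1 D_1 \cdots A_k D_k$, where each $A_j$ is an adjacent edge and each $D_j$ is a (possibly empty) string of distant edges confined to a single cloud $C_j$. Since $\pi(\tilde p) = \pi(\tilde p') = p$, both lifts visit the same ordered sequence of clouds $C_0, \ldots, C_k$ and make their $j$-th adjacent jump between the same ordered pair of clouds. Composing morphisms right-to-left, $f(\tilde p) = f(D_k)\circ f(A_k)\circ f(D_{k-1}) \circ\cdots\circ f(A_1)\circ f(D_0)$, and likewise for $\tilde p'$; the two differ only in their distant routing inside each cloud and in the choice of expanded adjacent edge used for each jump.

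The first fact I would establish is that distant routing is immaterial. Each distant edge $f_{ij}$ (first case) sends $1^{\otimes}\mapsto 1^{\otimes}$ and is invertible, with $f_{ji}\circ f_{ij} = \mathrm{Id}$ by uniqueness of the degree-$0$ map fixing $1^{\otimes}$. I would then prove that for fixed endpoints $u, v$ in a cloud every distant path $u \to v$ induces one and the same isomorphism $\phi_{u,v}$, satisfying $\phi_{v,w}\circ\phi_{u,v} = \phi_{u,w}$ and $\phi_{u,u} = \mathrm{Id}$. This reduces to the classical fact that two reduced words in one commutation class are linked by disjoint commutations, i.e.\ that the $2$-complex on a cloud with backtracks and disjoint squares as $2$-cells is simply connected, together with the observation that both kinds of $2$-cell preserve the morphism: backtracks cancel because $f_{ji}\circ f_{ij}=\mathrm{Id}$, and disjoint squares commute by bifunctoriality of $\otimes_R$ (the interchange law). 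Consequently the Bott--Samelson bimodules attached to the vertices of a cloud are canonically and coherently isomorphic, and $f(D_j) = \phi_{\,\mathrm{entry}_j,\ \mathrm{exit}_j}$ depends only on where $\tilde p$ enters and leaves $C_j$.

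Using transitivity of $\phi$ I would factor each block through the chosen representative $r_j$ of $C_j$ and regroup, rewriting $f(\tilde p)$ as a composite of \emph{canonical edge morphisms} $F(E_j) := \phi_{\,b_j, r_j}\circ f(A_j)\circ \phi_{\,r_{j-1}, a_j}$, where $A_j$ runs from $a_j \in C_{j-1}$ to $b_j \in C_j$. It then suffices to show that $F(E_j)$ does not depend on which expanded adjacent edge projecting to $E_j$ is used, i.e.\ that for two such edges $A\colon a\to b$ and $A'\colon a'\to b'$ the square $\phi_{b,b'}\circ f(A) = f(A')\circ \phi_{a,a'}$ commutes. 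When the distant moves linking $a$ to $a'$ avoid the three positions of the braid triple of $A$, this is once more the interchange law.

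The genuine obstacle is the overlapping case, where a distant move touches a position of the braid triple, so the square is not a disjoint square. Here I expect to have to argue directly, either by computing with the explicit formulae (\ref{eq1}) and (\ref{eq2}) to verify that moving a far letter out of the triple, braiding, and moving it back equals braiding at the shifted positions, or by phrasing this as naturality of the braid morphisms $f_{sr}$ with respect to the symmetric structure on commuting tensor factors. Equivalently, the crux is to show that all lifts of $p$ are connected using only backtracks and disjoint squares — so that no nontrivial hexagon or Zamolodchikov $2$-cell is ever forced, since merely altering distant routing and the representative adjacent edge keeps one away from the rank-three polygons — and that these two elementary moves preserve $f$. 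Once the commuting square is verified, $F(E_j)$ is well defined, $f(\tilde p) = F(E_k)\circ\cdots\circ F(E_1)$ depends only on $p$, and therefore $f(\tilde p) = f(\tilde p')$.
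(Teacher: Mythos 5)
Your decomposition into distant blocks separated by adjacent edges is the right starting point, but the argument has two genuine gaps. First, your claim that distant routing inside a cloud is immaterial rests on the assertion that the $2$-complex on a cloud with backtracks and disjoint squares as $2$-cells is simply connected. This is false: a cloud containing three pairwise distant letters in consecutive positions contains a distant hexagon (see Figure \ref{DistantHexagon}, where the cloud of $246$ is a six-cycle admitting no disjoint squares at all, since a disjoint square needs four positions), so cycles in a cloud are \emph{not} generated by backtracks and disjoint squares. The conclusion you want is still true, but it requires the hexagonal $2$-cells as well, and checking that the morphism is preserved around them is exactly what is packaged in \cite[Prop.\ 3.10]{elias2016thicker}: distant edges are unoriented, so every purely distant path is an oriented path, and any two oriented paths with the same endpoints induce the same morphism. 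The paper's proof consists of precisely this citation applied to each maximal distant segment.

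Second, you identify the ``overlapping case'' --- a distant move touching a position of the braid triple of an adjacent edge --- as the crux and then leave it unresolved (``I expect to have to argue directly\dots''); as written this is an acknowledged hole rather than a proof. Two remarks. (i) For the Proposition as stated this case never arises: by Notation \ref{notation1} and Remark \ref{remarkpi}, $\pi$ sends every non-representative adjacent edge to $\emptyset$, so a lift $\tilde p$ with $\pi(\tilde p)=p$ must traverse exactly the chosen representative adjacent edges. The two lifts therefore share all their adjacent edges and differ only in distant routing, which is why the paper's proof is only a few lines; the dependence on the choice of representatives is deferred to Remark \ref{remarkpi}. (ii) Even in that deferred setting, the commuting square $\phi_{b,b'}\circ f(A)=f(A')\circ\phi_{a,a'}$ follows again from BS-consistency, since two adjacent edges projecting to the same conflated edge carry the same Manin--Schechtman orientation and hence both sides are oriented paths from $a$ to $b'$; no direct computation with the formulae (\ref{eq1})--(\ref{eq2}) is needed. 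With the hexagon $2$-cells added and the square handled by \cite[Prop.\ 3.10]{elias2016thicker} instead of a promised computation, your approach closes up and becomes a self-contained variant of the paper's argument.
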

		
		\begin{proof}
			Any two paths in $\widetilde{\Gamma}_w$ defining $f(p)$ will only differ on their distant edges connecting two successive adjacent edges. So, for a fixed pair of successive adjacent edges, each sequence of distant edges will have the same starting vertex and the same ending vertex. These sequences represent oriented paths, and therefore, their induced path morphisms are the same (see \cite[Prop. 3.10]{elias2016thicker}). Repeating this argument in each sequence of distant edges, we have the result.
		\end{proof}
		
		\begin{remark}\label{remarkpi}
			Definition \ref{conflatedmorphism} does not depend on $\pi$. If we change the choices of adjacent edges $e$ such that $\pi(e)=\phi$, by \cite[Prop 3.10]{elias2016thicker} we obtain the same path morphism.
		\end{remark}
		
		The next two propositions show the equivalence between working with paths in rex graphs and working with paths in conflated expression graphs. We will therefore deduce that there is an equivalence between the Forking Path Conjecture (that we call FPC in the rest of this paper) for $\Gamma_w$ and for $\widetilde{\Gamma}_w$.
		\begin{proposition}
			For any $w\in S_n$, finding paths in its conflated expression graph giving a counterexample for the FPC gives a counterexample for the FPC in its rex graph.
		\end{proposition}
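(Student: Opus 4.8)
The plan is to take a counterexample that lives in the conflated expression graph $\Gamma_w$ and lift it to a counterexample in $\widetilde{\Gamma}_w$ (equivalently in $Rex(w)$), while controlling the two features that make a pair of paths a counterexample: they must share their endpoints and be complete, yet induce different morphisms. So suppose $p,q$ are complete paths in $\Gamma_w$ with $p_a=q_a$, $p_z=q_z$ and $f(p)\neq f(q)$. By Definition \ref{conflatedmorphism}, \emph{any} lifts $\tilde p,\tilde q$ in $\widetilde{\Gamma}_w$ with $\pi(\tilde p)=p$, $\pi(\tilde q)=q$ and with endpoints the chosen representatives of $p_a=q_a$ and $p_z=q_z$ satisfy $f(\tilde p)=f(p)$ and $f(\tilde q)=f(q)$. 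Hence the inequality $f(\tilde p)\neq f(\tilde q)$ and the coincidence of endpoints come for free; the only nontrivial requirement is to arrange that $\tilde p$ and $\tilde q$ be \emph{complete} in $\widetilde{\Gamma}_w$, i.e.\ that they meet every reduced expression, including all the vertices hidden inside each cloud $\pi^{-1}(\mathbf{x})$.

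The device I would use is that rerouting the within-cloud portions of a lift does not change the induced morphism. A connecting sequence of distant edges inside a single cloud is an oriented (and reverse-oriented) path, since it follows only unoriented edges; by the well-definedness of conflated path morphisms just established, equivalently by $BS$-consistency (\cite[Prop.\ 3.10]{elias2016thicker}), any two such distant walks inside one cloud with the same start and end induce the same morphism. Concretely, I would build $\tilde p$ by lifting $p$ edge by edge and, each time the lift sits in a cloud $C$ between two consecutive adjacent moves (or at the two ends of the path), replacing the connecting distant walk from its entry vertex $v$ to its exit vertex $v'$ by a longer distant walk that starts at $v$, visits every vertex of $C$, and terminates at $v'$. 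Such a walk exists because a cloud is connected by distant edges, so from $v$ one may tour all vertices and return, then proceed to $v'$. Since each replacement is a within-cloud distant walk with unchanged endpoints, it has the same morphism as the original connecting walk, and therefore $f(\tilde p)=f(p)$ is unaffected; likewise $f(\tilde q)=f(q)$.

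It remains to read off completeness. Because $p$ is complete in $\Gamma_w$ it passes through every cloud, and the rerouted lift visits every vertex of each cloud it enters; since every reduced expression lies in some cloud, $\tilde p$ meets every vertex of $\widetilde{\Gamma}_w$ and is thus complete, and similarly for $\tilde q$. Choosing both lifts to start at the representative of $p_a=q_a$ and to end at the representative of $p_z=q_z$ gives $\tilde p_a=\tilde q_a$ and $\tilde p_z=\tilde q_z$. With shared endpoints, completeness, and $f(\tilde p)\neq f(\tilde q)$ all in hand, the pair $\tilde p,\tilde q$ is a counterexample to the FPC in $Rex(w)$.

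The main obstacle is the bookkeeping in the middle step: one must be certain that the enlarged within-cloud walks can be inserted between the adjacent moves without disturbing those adjacent moves, and that such insertions genuinely leave the composite morphism fixed. This is exactly where $BS$-consistency is essential, since it guarantees that the nontrivial part of $f(\tilde p)$ is carried entirely by the adjacent edges (the images $\pi(e)$) while the distant detours inside each cloud contribute nothing new. Everything else is formal, the inequality of morphisms being inherited directly from the conflated level through Definition \ref{conflatedmorphism}.
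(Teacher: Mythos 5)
Your proposal is correct and follows essentially the same route as the paper: lift the conflated counterexample, note the lifts already hit every cloud, and then enlarge the within-cloud distant walks into complete tours of each cloud, which leaves the path morphisms unchanged by $BS$-consistency. The paper phrases the modification as inserting a complete closed path in each cloud rather than rerouting the connecting walk, but this is the same device with the same justification.
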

		\begin{proof}
			For any $w\in S_n$, let $p,q$ be  complete paths in $\Gamma_w$, with $p_a=q_a$ and $p_z=q_z$, such that $f(p)\neq f(q)$. By definition, $f(p)$ is equal to $f(\tilde{p})$, where $\tilde{p}$ is a path in $\widetilde{\Gamma}_w$ satisfying the requirements in Definition \ref{conflatedmorphism}. Similarly for $f(q)$ and $f(\tilde{q})$. As $p$ and $q$ are complete, $\tilde{p}$ and $\tilde{q}$ pass through every cloud in $\widetilde{\Gamma}_w$. We can modify $\tilde{p}$ and $\tilde{q}$ to pass through every vertex in $\widetilde{\Gamma}_w$ as follows. Each time $\tilde{p}$ or $\tilde{q}$ passes through a cloud, add a complete closed  path in that cloud and then continue as before (this does not alter the path morphism). Let us call $\tilde{p}_0$ and $\tilde{q}_0$ the new paths in $Rex(w)$, then they are complete paths such that $f(\tilde{p}_0)\neq f(\tilde{q}_0).$
		\end{proof}
	
		\begin{proposition}
		For any $w\in S_n$, the FPC in $\Gamma_w$ implies the FPC in $Rex(w)$.
	\end{proposition}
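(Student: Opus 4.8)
The plan is to establish the contrapositive, in perfect parallel with the previous proposition but running the correspondence in the opposite direction: instead of lifting conflated paths up to $\widetilde{\Gamma}_w$, I push rex paths down to $\Gamma_w$ via the quotient map $\pi$ of Notation~\ref{notation1}. Thus I assume the FPC fails in $Rex(w)=\widetilde{\Gamma}_w$, fixing complete paths $\tilde p,\tilde q$ with $\tilde p_a=\tilde q_a$, $\tilde p_z=\tilde q_z$ and $f(\tilde p)\neq f(\tilde q)$, and I produce from them a counterexample in $\Gamma_w$, contradicting the FPC there.

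First I would normalize the endpoints. The vertices of $\Gamma_w$ are the chosen cloud representatives (Definition~\ref{confgraph}), so a counterexample in $\Gamma_w$ must begin and end at such representatives. I therefore replace $\tilde p,\tilde q$ by paths $\tilde p',\tilde q'$ obtained by prepending to both the \emph{same} sequence of distant edges from the representative of $\tilde p_a$'s cloud to $\tilde p_a$, and appending to both the same sequence of distant edges from $\tilde p_z$ to the representative of $\tilde p_z$'s cloud (such sequences exist because any two vertices of a cloud are joined by distant edges). Since distant-edge morphisms are isomorphisms and the prepended and appended factors are identical for $\tilde p'$ and $\tilde q'$, the relations $f(\tilde p')=g_z\circ f(\tilde p)\circ g_a$ and $f(\tilde q')=g_z\circ f(\tilde q)\circ g_a$ with $g_a,g_z$ invertible force $f(\tilde p')\neq f(\tilde q')$; moreover $\tilde p',\tilde q'$ are still complete, still share endpoints, and now start and end at representatives.

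Next I would project: set $p=\pi(\tilde p')$ and $q=\pi(\tilde q')$ in $\Gamma_w$. These are complete, because $\pi$ is surjective on vertices (clouds are nonempty), so the image of a path meeting every vertex of $\widetilde{\Gamma}_w$ meets every vertex of $\Gamma_w$; and they share endpoints, since $\pi(\tilde p'_a)=\pi(\tilde q'_a)$ and $\pi(\tilde p'_z)=\pi(\tilde q'_z)$. Finally, because $\tilde p'$ starts and ends at the very representatives $p_a,p_z$ and satisfies $\pi(\tilde p')=p$, it is an admissible lift in the sense of Definition~\ref{conflatedmorphism}, so $f(p)=f(\tilde p')$; likewise $f(q)=f(\tilde q')$. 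Hence $f(p)\neq f(q)$, and $p,q$ are the desired counterexample in $\Gamma_w$, completing the contrapositive.

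I expect the only genuinely delicate point to be the endpoint normalization in the second step: without it, the naive lift $\tilde p$ need not start at the chosen representative $p_a$, so Definition~\ref{conflatedmorphism} would not directly yield $f(p)=f(\tilde p)$, and one would instead be comparing morphisms between distinct (though isomorphic) Bott--Samelson bimodules. The clean way around this is exactly the observation that the correcting distant isomorphisms $g_a,g_z$ are shared by $\tilde p$ and $\tilde q$ and hence cancel out of the inequality, which is what the reduction records. Everything else is a direct reading of the definitions and of the well-definedness established in the preceding proposition (equivalently \cite[Prop.\ 3.10]{elias2016thicker}).
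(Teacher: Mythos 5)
Your contrapositive set-up is logically equivalent to the paper's direct argument, and your endpoint normalization (conjugating both paths by the same distant isomorphisms $g_a,g_z$ so that they start and end at the chosen cloud representatives) is a legitimate and even slightly more careful treatment of a point the paper glosses over. The gap is in the projection step. You treat $\pi$ as if it only kills distant edges, so that $\pi(\tilde p')$ is automatically a path in $\Gamma_w$ admitting $\tilde p'$ as a lift in the sense of Definition~\ref{conflatedmorphism}. But by the convention of Definition~\ref{confgraph}, $\Gamma_w$ is forced to be a simple graph: when two clouds are joined by several parallel adjacent edges (this already happens for $w=1214$, where $1214\leftrightarrow 2124$ and $4121\leftrightarrow 4212$ both connect the two clouds), only one representative edge survives, and $\pi(e)=\emptyset$ for the others. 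If $\tilde p'$ traverses a non-representative adjacent edge $e$ from a vertex of cloud $C_1$ to a vertex of cloud $C_2$, then the sequence $\pi(\tilde p')$ jumps from $\pi(C_1)$ to $\pi(C_2)$ with no edge in between: it is not a path, so it is not a valid input to the FPC in $\Gamma_w$, and Definition~\ref{conflatedmorphism} cannot be invoked to conclude $f(p)=f(\tilde p')$.

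This is exactly the issue the paper's proof is built around. The repair is a local surgery on $\tilde p'$ (and $\tilde q'$) before projecting: replace each vanishing adjacent edge $e$ by a detour that travels through the cloud along distant edges to the source of the representative adjacent edge, crosses that edge, and returns through the target cloud along distant edges to the original endpoint of $e$. The edge $e$ and its replacement are both oriented (or both reverse-oriented) paths with the same endpoints, so by BS-consistency \cite[Prop.~3.10]{elias2016thicker} the surgery does not change the path morphism, and after it every surviving adjacent edge is a representative, so the projection is a genuine complete path in $\Gamma_w$. With that insertion your argument closes; without it, the step ``$f(p)=f(\tilde p')$'' is unjustified whenever a parallel adjacent edge occurs.
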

	\begin{proof}
		Suppose that the FPC is true in $\Gamma_w$ for some element $w\in S_n$. Let $\tilde{p}, \tilde{q}$ be complete paths in $Rex(w)$, with $\tilde{p}_a=\tilde{q}_a$ and $\tilde{p}_z=\tilde{q}_z$. 
		When applying the projection $\pi$ to these paths, it is possible to obtain $\pi(e) = \phi$ for some adjacent edges of these paths. This means that these edges vanish while doing the identifications of vertices and choices of edges in the construction of $\Gamma_w$. If this is the case, it is possible to replace each of these edges $e$ at a time. We can do that replacement with a path that goes through the corresponding cloud from the same starting vertex of $e$ to the starting vertex of the edge that does not vanish, then follows that edge, and then goes back through the corresponding cloud again to the same ending vertex of $e$, returning to the original path. These local replacements (one for each vanishing edge $e$) do not alter the resulting path morphism, because the involved subpaths are oriented paths (see \cite[Prop. 3.10]{elias2016thicker}). So, by modifying the paths as so, the resulting projections will satisfy the hypothesis of the FPC in $\Gamma_w$. Therefore $f(\tilde{p}) = f(\pi(\tilde{p})) = f(\pi(\tilde{q}))= f(\tilde{q})$ and we have the FPC for $Rex(w)$.	
	\end{proof}

		So we obtain an equivalent conjecture.
	
	\begin{conjecture*}[FPC for conflated expression graphs]
		Let $w \in S_n$. Let $p, q$ be two complete paths in $\Gamma_w$, with $p_a = q_a$ and $p_z = q_z$. Then $f(p) = f(q)$.
	\end{conjecture*}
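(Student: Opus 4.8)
The plan is to translate the geometric statement about $\Gamma_w$ into an algebraic statement about compositions of endpoint-determined morphisms, and then to attack it by rewriting. By $BS$-consistency (\cite[Prop.~3.10]{elias2016thicker}), for any two comparable vertices $\mathbf{x}\geq\mathbf{y}$ in the Manin--Schechtman order the oriented path morphism $f_{\mathbf{x}\searrow\mathbf{y}}$ depends only on the pair $(\mathbf{x},\mathbf{y})$, and likewise the reverse-oriented $f_{\mathbf{y}\nearrow\mathbf{x}}$ depends only on $(\mathbf{y},\mathbf{x})$. These morphisms moreover compose along monotone chains: if $\mathbf{x}\geq\mathbf{y}\geq\mathbf{z}$ then $f_{\mathbf{y}\searrow\mathbf{z}}\circ f_{\mathbf{x}\searrow\mathbf{y}}=f_{\mathbf{x}\searrow\mathbf{z}}$, since both sides are induced by oriented paths with the same endpoints. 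Using this I would decompose an arbitrary complete path $p$ at its turning vertices---the local minima and maxima of $p$ relative to the orientation---into maximal monotone subpaths, and thereby express $f(p)$ as an alternating composition $\cdots\circ f_{\mathbf{a}_2\nearrow\mathbf{b}_2}\circ f_{\mathbf{b}_1\searrow\mathbf{a}_1}\circ\cdots$ written purely in terms of these endpoint-only morphisms.

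Next I would record the relations available for rewriting such alternating words. The fundamental one is Theorem \ref{teo3.18}: for all $\mathbf{x},\mathbf{y}$ one has $DUD_{\mathbf{x},\mathbf{y}}=UDU_{\mathbf{x},\mathbf{y}}$, so a down--up--down swing that reaches the sink $\mathbf{t}$ and the source $\mathbf{s}$ may be traded for the up--down--up swing with the same endpoints. Specializing $\mathbf{x}=\mathbf{s}$, $\mathbf{y}=\mathbf{t}$ yields \eqref{ZZZ} and \eqref{ZZZ2}, which collapse a full oscillation $Z\circ\overline{Z}\circ Z$ back to $Z$. The proof would then proceed by induction on the number of turning vertices of the alternating word for $f(p)$: because $p$ and $q$ are complete, each meets both $\mathbf{s}$ and $\mathbf{t}$, and the aim is to use this to push every turning point out to an extreme, apply Theorem \ref{teo3.18} to flip a full swing, and so reduce to a word with fewer turns, until reaching a canonical form determined by $p_a=q_a$ and $p_z=q_z$ alone.

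The step I expect to be the main obstacle is the normalization of \emph{partial} oscillations---subpaths that turn around at some intermediate vertex before reaching $\mathbf{s}$ or $\mathbf{t}$. The relation $DUD=UDU$ and its specializations \eqref{ZZZ}, \eqref{ZZZ2} only govern swings that run all the way to the extremes, so before one can flip a partial up--down--up segment $f_{\mathbf{a}\nearrow\mathbf{b}}$ one must argue that completeness licenses replacing it, without changing the induced morphism, by a full-swing segment touching $\mathbf{s}$ and $\mathbf{t}$. Making this rigorous requires controlling the relations among the endpoint morphisms $f_{\mathbf{x}\searrow\mathbf{y}}$ for \emph{arbitrary} comparable pairs, not merely for the source and sink---that is, understanding the full composition structure of the Manin--Schechtman poset rather than only its two extreme vertices. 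This is the crux of the difficulty, and it is most delicate precisely for the small but densely connected graphs, where many distinct partial turning patterns with the same endpoints coexist; pinning down how these intermediate turning points interact is exactly the local combinatorial input the argument hinges on.
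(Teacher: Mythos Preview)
The statement you are attempting to prove is a \emph{conjecture}, and the paper does not prove it---on the contrary, Section~\ref{counterexample} exhibits a counterexample. For $w=\sigma=s_1s_2s_3s_2s_1\in S_4$ the conflated graph $\Gamma_\sigma$ is the line $\mathbf{s}-c-\mathbf{t}$, and the two complete paths $[c,\mathbf{t},c,\mathbf{s},c]$ and $[c,\mathbf{s},c,\mathbf{t},c]$ (the images in $\Gamma_\sigma$ of the paths $v_1,v_2$ of Section~\ref{counterexample}) induce different morphisms. So no argument can succeed for general $w$.

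The concrete gap in your outline is the appeal to Theorem~\ref{teo3.18}. That theorem, the identity $DUD_{\mathbf{x},\mathbf{y}}=UDU_{\mathbf{x},\mathbf{y}}$, is stated and proved in \cite{elias2016thicker} only for $\Gamma=\Gamma_{w_{0,n}}$; you invoke it for arbitrary $w$. For $w=\sigma$ one computes directly that $DUD_{c,c}$ is the path morphism of $[c,\mathbf{t},c,\mathbf{s},c]$ and $UDU_{c,c}$ is that of $[c,\mathbf{s},c,\mathbf{t},c]$, and these are precisely the two morphisms the paper shows to be unequal. Thus the very rewriting rule on which your induction rests is false in the generality you need. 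Your diagnosis of the obstacle---normalizing partial oscillations that do not reach both extremes---is accurate, but the failure is more severe: even full swings through $\mathbf{s}$ and $\mathbf{t}$ need not commute for $w\neq w_0$. The paper's positive results (FPC for $w_{0,4}$ and for the other elements of $S_4$) are obtained not by the abstract rewriting you propose but by element-specific diagrammatic computations (Lemma~\ref{propSingle}, Proposition~\ref{propDoble}, Proposition~\ref{prop3.12}) that verify the needed equivalences by hand.
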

	
	\subsection{Calculating path morphisms}
	
	\begin{definition}
		Consider $w \in S_n$ and $\Gamma_w$ with the Manin-Schechtman orientation. We say that a path is {\it straight} if it goes in an oriented or reverse-oriented fashion from one vertex $\bf x$ to another vertex $\bf y$. We denote that by $\bf x \rightarrow \bf y$. We use this notation when we do not want to specify if $x\nearrow y$ or $x\searrow y$. In particular, straight paths $\bf s \searrow \bf t$ or $\bf t \nearrow \bf s$ will be called {\it direct} paths, and we denote them by the letter $d$. We say that a pair of paths $p, q$ are {\it equivalent} and we write $p \simeq q$ if $f(p)=f(q)$, i.e., if they define the same path morphism. If a direct path (resp. straight path) is a subpath of a larger path, we call it a {\it direct (resp. straight) subpath of the larger path}.
	\end{definition} 
	
	Now we restrict our attention to $\Gamma_{w_{0,n}}$. Let $p, q$ be two complete paths with $p_a= q_a$ and $p_z = q_z$, both containing a direct subpath $d$. We will show that $p \simeq q$. The main idea is to construct equivalent paths that lead us to a reduced problem, i.e., studying equivalences between a small set of paths.  We divide $p$ into three parts: the path before $d$, from $p_a$ to $d_a$, which we call the $p^{\alpha}$ {\it subpath}, the direct subpath $d$, and the path after $d$, from $d_z$ to $p_z$, which we call the $p^{\beta}$ {\it subpath}. If there exist more than one direct subpath, it does not matter which one we choose to work with. We now focus on the $p^{\alpha}$ subpath.
	
	\begin{proposition}\label{propsimplify}
		Let $p$ be a complete path in $\Gamma_{w_{0,n}}$ with a direct subpath $d$. Then $p^{\alpha}$ is equivalent to a path $p'$ of the form $p_a \nearrow \mathbf{s} \searrow \mathbf{t} \nearrow \mathbf{s} \searrow \ldots \rightarrow d_a$, or $p_a \searrow \mathbf{t} \nearrow \mathbf{s} \searrow \mathbf{t} \nearrow \ldots \rightarrow d_a$.
	\end{proposition}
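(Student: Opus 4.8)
The plan is to induct on the number $k$ of maximal straight (monotone) subpaths into which $p^{\alpha}$ decomposes. Writing $p^{\alpha}=r_1 r_2\cdots r_k$ with alternating orientations and turning vertices $\mathbf{m}_1,\dots,\mathbf{m}_{k-1}$ (alternately local maxima and minima for the Manin--Schechtman orientation), the goal is to show that $f(p^{\alpha})$ equals the morphism of a path all of whose turning vertices are $\mathbf{s}$ or $\mathbf{t}$, i.e. one of the two stated forms. Which of the two forms arises is dictated by the orientation of $r_1$: if $r_1$ ascends I would aim for the form beginning $p_a\nearrow\mathbf{s}$, and if it descends for the form beginning $p_a\searrow\mathbf{t}$. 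The engine of the straightening will be Theorem \ref{teo3.18} (which flips a full-range $\mathbf{x}\searrow\mathbf{t}\nearrow\mathbf{s}\searrow\mathbf{y}$ corner into $\mathbf{x}\nearrow\mathbf{s}\searrow\mathbf{t}\nearrow\mathbf{y}$) together with the relations \eqref{ZZZ} and \eqref{ZZZ2}, which cancel redundant round trips at an extreme.

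For the base case $k=1$ the path $p^{\alpha}$ is itself straight, $p_a\to d_a$. Since $d_a\in\{\mathbf{s},\mathbf{t}\}$ and $\mathbf{s}$ (resp. $\mathbf{t}$) is the unique source (resp. sink), a nontrivial ascending segment can only terminate at $\mathbf{s}$ and a descending one only at $\mathbf{t}$; in either case $p^{\alpha}$ is already in normal form. For the inductive step I would isolate the last turning vertex $\mathbf{m}_{k-1}$ and grow it out to the appropriate extreme: by the extension property (Proposition 3.16 of \cite{elias2016thicker}) every oriented path prolongs to a source-to-sink path, so the descent reaching a valley $\mathbf{m}_{k-1}$ extends to a descent reaching $\mathbf{t}$ and the following ascent to one from $\mathbf{s}$, producing a path one of whose turning vertices is now extreme. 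BS-consistency (property \ref{BS-Consistency}) lets me freely replace each monotone piece by any oriented representative with the same endpoints, and repeated application of Theorem \ref{teo3.18} rectifies the resulting full-range corners into the canonical alternation $\mathbf{s}\searrow\mathbf{t}\nearrow\mathbf{s}\searrow\cdots$. Once the first (equivalently, last) turning vertex has been pushed to an extreme, the remaining prefix has strictly fewer straight segments, and the inductive hypothesis finishes the argument.

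The delicate point — and the step I expect to be the main obstacle — is justifying that growing a non-extreme turning vertex out to $\mathbf{s}$ or $\mathbf{t}$ genuinely preserves the path morphism. Naively inserting a round trip such as $\mathbf{m}\nearrow\mathbf{s}\searrow\mathbf{m}$ around a peak does change $f$, since $f_{\mathbf{s}\searrow\mathbf{m}}\circ f_{\mathbf{m}\nearrow\mathbf{s}}$ is not the identity; so the naive in-place extension is not available, and this is exactly where care is needed. What I expect to save the argument is that each such straightening only has to hold after composing with the adjacent direct subpath $d$, that is, inside the complete path $p$: the morphisms $Z$ and $\overline Z$ of Definition \ref{Zeta} are the idempotent-magical projections onto $B_{w_0}$, so pre- or post-composing with them annihilates precisely the discrepancy between, say, $\mathbf{m}\nearrow\mathbf{s}$ and $\mathbf{m}\searrow\mathbf{t}\nearrow\mathbf{s}$. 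The relations \eqref{ZZZ}, \eqref{ZZZ2} and Theorem \ref{teo3.18} would therefore be invoked exactly at the corners abutting the extremes, and the crux is to verify that \emph{every} corner created during the induction is of this absorbable type — which is where completeness of $p$ and the presence of the direct subpath $d$ are essential, guaranteeing that the growth steps always take place next to a source-to-sink passage.
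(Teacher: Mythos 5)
Your plan is the paper's proof: peel off the turning vertices of $p^{\alpha}$ one at a time, starting from the one adjacent to $d$, using BS-consistency, Theorem \ref{teo3.18} and the relations (\ref{ZZZ}), (\ref{ZZZ2}). You also correctly diagnose the danger (inserting a round trip $\mathbf{m}\nearrow\mathbf{s}\searrow\mathbf{m}$ in place does change the morphism) and correctly guess the remedy (the rewriting is only valid against the adjacent direct subpath). The one thing you leave open --- ``the crux is to verify that every corner created during the induction is of this absorbable type'' --- is precisely where the paper does its work, and it closes exactly as you anticipate. Assuming $d_a=\mathbf{s}$ and letting $x_1\nearrow\mathbf{s}$ be the maximal straight segment of $p^{\alpha}$ ending at $\mathbf{s}$, with $x_2\searrow x_1$ the preceding segment, one computes
\[
x_2\searrow x_1\nearrow \mathbf{s}\searrow\mathbf{t}
\;\simeq\;
x_2\searrow x_1\nearrow \mathbf{s}\searrow\mathbf{t}\nearrow\mathbf{s}\searrow\mathbf{t}
\;\simeq\;
x_2\searrow x_1\searrow \mathbf{t}\nearrow\mathbf{s}\searrow\mathbf{t}\nearrow\mathbf{s}\searrow\mathbf{t}
\;\simeq\;
x_2\searrow \mathbf{t}\nearrow\mathbf{s}\searrow\mathbf{t},
\]
where the first step is (\ref{ZZZ}), the second is $UDU_{x_1,\mathbf{t}}=DUD_{x_1,\mathbf{t}}$ from Theorem \ref{teo3.18}, and the third uses BS-consistency to merge $x_2\searrow x_1\searrow\mathbf{t}$ and (\ref{ZZZ}) to cancel the redundant round trip. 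The essential observation that keeps your induction running --- and the one sentence missing from your write-up --- is that the output again terminates in a direct subpath ($\mathbf{t}\nearrow\mathbf{s}$ inside $\searrow\mathbf{t}\nearrow\mathbf{s}\searrow\mathbf{t}$), so the next corner of $p^{\alpha}$ is automatically adjacent to a source-to-sink passage and the same absorption applies; no appeal to the extension property of \cite[Prop.\ 3.16]{elias2016thicker} is needed, and completeness of $p$ plays no role here beyond guaranteeing that $d$ exists.
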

	
	\begin{proof}
		We assume without loss of generality that $d_a=\bf s$. Then, $p^{\alpha}$ ends in the vertex $\mathbf{s}$, that is, $p^{\alpha}_z=\bf s$. If $p^{\alpha}=\mathrm{id}$ (i.e. the empty sequence), we are done. If not, 
		the path $p^{\alpha}$ has a straight subpath from a vertex $x_1$ to $\bf s$ (with $x_1\neq \bf s$ ) which we take maximal in this sense, i.e. the straight subpath $x_1 \nearrow \mathbf{s}$ is not contained in any larger straight subpath.\\ 
		
		If $x_1 \nearrow \mathbf{s} = p^{\alpha}$, we have the desired path $p'$; if this is not the case, there exists a vertex $x_2$ (maximal in the same sense)
		such that $x_2 \searrow x_1$ is a subpath of $p$. We have that
		\begin{equation}{\label{ext}}
			x_2\searrow x_1\nearrow \mathbf{s} \searrow \mathbf{t}
		\end{equation}
		is a subpath of the path $p$ corresponding to the end of $p^{\alpha}$, followed by $d$. Using equation (\ref{ZZZ}), we rewrite (\ref{ext}) as $x_2\searrow x_1\nearrow \mathbf{s} \searrow \mathbf{t} \nearrow \mathbf{s} \searrow \mathbf{t}$. Now, by Theorem \ref{teo3.18}, we apply $UDU_{x_1,\mathbf{t}}=DUD_{x_1,\mathbf{t}}$,  to obtain $x_2\searrow x_1\searrow \mathbf{t} \nearrow \mathbf{s} \searrow \mathbf{t}\nearrow \mathbf{s} \searrow \mathbf{t}$. Using again (\ref{ZZZ}) to simplify, we see that (\ref{ext}) has the same path morphism as $x_2\searrow \mathbf{t}\nearrow \mathbf{s} \searrow \mathbf{t}$. We now consider the subpath $\mathbf{t}\nearrow \mathbf{s}$ as our new direct subpath $d$. Using repeatedly this process we obtain the equivalent path $p'$ of the prescribed form.
	\end{proof}
	
	The same arguments work for the $\beta$ subpath, {\it mutatis mutandis}. This way, after simplifications using the identities (\ref{ZZZ}) and (\ref{ZZZ2}) if needed, from $p$ it is possible to obtain a new path $\hat{p}$ consisting of the following: a straight path from $p_a$ to $\mathbf{s}$ or $\mathbf{t}$, followed by one or two direct paths, and then a straight path from $\mathbf{s}$ or $\mathbf{t}$ to $p_z$, satisfying $f(\hat{p})=f(p)$. If $\hat{p}_a$ or $\hat{p}_z$ are $\mathbf{s}$ or $\mathbf{t}$, then $\hat{p}$ does not have the $\alpha$ or the $\beta$ 
	subpaths (both cases may occur at the same time).
	
	\begin{definition}\label{simplified}
		The path $\hat{p}$ obtained from the application of Proposition \ref{propsimplify} to $p$ will be called a {\it simplified path}. 
	\end{definition}
	
	\begin{proposition}\label{dproof}
		In $\Gamma_{w_0}$, consider any pair of simplified complete paths, $p$ and $q$, both containing a direct path $d$. 
		Suppose that $p_a=q_a$ and $p_z=q_z$. Then $f(p)$=$f(q)$.
	\end{proposition}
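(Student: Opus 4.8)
The plan is to run the simplified-path machinery developed just above to its logical end. Since $p$ contains a direct subpath, Proposition \ref{propsimplify} and the discussion following it replace $p$ by an equivalent simplified path $\hat p$ made of a straight $\alpha$-part from $p_a$ to $\mathbf{s}$ or $\mathbf{t}$, a middle of one or two direct paths, and a straight $\beta$-part from $\mathbf{s}$ or $\mathbf{t}$ to $p_z$. First I would observe that the middle is forced by its endpoints: a direct path leaving $\mathbf{s}$ must be $Z$ and one leaving $\mathbf{t}$ must be $\overline Z$, so the terminus of the $\alpha$-part and the origin of the $\beta$-part already determine the middle. Hence there are exactly four shapes, indexed by (terminus of $\alpha$, origin of $\beta$) $\in\{\mathbf{s},\mathbf{t}\}^2$:
\begin{align*}
\text{(s,t):}\quad & p_a\nearrow\mathbf{s}\searrow\mathbf{t}\nearrow p_z, & \text{(t,s):}\quad & p_a\searrow\mathbf{t}\nearrow\mathbf{s}\searrow p_z,\\
\text{(s,s):}\quad & p_a\nearrow\mathbf{s}\searrow\mathbf{t}\nearrow\mathbf{s}\searrow p_z, & \text{(t,t):}\quad & p_a\searrow\mathbf{t}\nearrow\mathbf{s}\searrow\mathbf{t}\nearrow p_z.
\end{align*}
My target is to show every shape computes the single morphism $DUD_{p_a,p_z}=UDU_{p_a,p_z}$ of Theorem \ref{teo3.18}, which depends only on $p_a$ and $p_z$; then, since $p_a=q_a$ and $p_z=q_z$, the equality $f(p)=f(q)$ is immediate.

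The two one-direct-path shapes are read off directly from the definitions. Shape (t,s) has morphism $f_{\mathbf{s}\searrow p_z}\circ f_{\mathbf{t}\nearrow\mathbf{s}}\circ f_{p_a\searrow\mathbf{t}}$, which is precisely $DUD_{p_a,p_z}$, and shape (s,t) has morphism $f_{\mathbf{t}\nearrow p_z}\circ f_{\mathbf{s}\searrow\mathbf{t}}\circ f_{p_a\nearrow\mathbf{s}}$, which is precisely $UDU_{p_a,p_z}$. By Theorem \ref{teo3.18} these coincide, so the two one-direct-path shapes are equivalent.

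For the two-direct-path shapes I would peel off the $\alpha$-part and rewrite the remaining middle-plus-$\beta$ portion using a \emph{degenerate} instance of Theorem \ref{teo3.18}, i.e. one in which a source or sink endpoint forces one leg to collapse to the identity. In shape (s,s) the portion $\mathbf{s}\searrow\mathbf{t}\nearrow\mathbf{s}\searrow p_z$ equals $DUD_{\mathbf{s},p_z}$, whereas $UDU_{\mathbf{s},p_z}$ collapses (using $f_{\mathbf{s}\nearrow\mathbf{s}}=\mathrm{id}$) to $f_{\mathbf{t}\nearrow p_z}\circ Z$; equating them by Theorem \ref{teo3.18} turns shape (s,s) into $f_{\mathbf{t}\nearrow p_z}\circ Z\circ f_{p_a\nearrow\mathbf{s}}=UDU_{p_a,p_z}$. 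Symmetrically, in shape (t,t) the portion $\mathbf{t}\nearrow\mathbf{s}\searrow\mathbf{t}\nearrow p_z$ equals $UDU_{\mathbf{t},p_z}$, which by Theorem \ref{teo3.18} equals $DUD_{\mathbf{t},p_z}=f_{\mathbf{s}\searrow p_z}\circ\overline Z$ (using $f_{\mathbf{t}\searrow\mathbf{t}}=\mathrm{id}$), turning shape (t,t) into $f_{\mathbf{s}\searrow p_z}\circ\overline Z\circ f_{p_a\searrow\mathbf{t}}=DUD_{p_a,p_z}$. Thus all four shapes equal the common value $DUD_{p_a,p_z}=UDU_{p_a,p_z}$, and the proposition follows.

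Once the simplified-path framework is in place, the computation itself is short; the honest work is front-loaded into the reduction to four shapes. The main thing I would be careful about is \emph{exhaustiveness}: I must verify that the middle really admits only these four configurations and handle the degenerate cases where $p_a\in\{\mathbf{s},\mathbf{t}\}$ or $p_z\in\{\mathbf{s},\mathbf{t}\}$, in which the $\alpha$- or $\beta$-part is absent and the corresponding $f$ is the identity (the same formulas persist verbatim). I would also note that completeness of $p$ is what supplies the direct subpath required to invoke Proposition \ref{propsimplify} in the first place. The one genuinely non-formal step is the recognition that the two-direct-path shapes are resolved exactly by the source/sink-degenerate cases of $DUD=UDU$, where one of the three legs becomes trivial.
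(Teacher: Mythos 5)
Your proof is correct and follows essentially the same route as the paper: both enumerate the same four possible simplified shapes (determined by whether the straight parts meet the middle at $\mathbf{s}$ or $\mathbf{t}$) and resolve them via Theorem \ref{teo3.18}, including its degenerate instances where one leg of $DUD$ or $UDU$ collapses to the identity. The only cosmetic difference is that you show each shape directly equals the common value $DUD_{p_a,p_z}=UDU_{p_a,p_z}$, whereas the paper chains the four shapes pairwise.
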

	\begin{proof}	
		$\bullet$ First case: $p_a=q_a=\mathbf{s}$.  If $p_z=q_z=\mathbf{t}$, then they are necessarily equivalent to $\mathbf{s} \searrow \mathbf{t}$. If $p_z=q_z=\mathbf{s}$, they will be equivalent to $\mathbf{s} \searrow \mathbf{t} \nearrow \mathbf{s}$. If $p_z=q_z=u$, with $u$ being a vertex that is not $\mathbf{s}$ or $\mathbf{t}$, we have two possibilities: $\mathbf{s} \searrow \mathbf{t} \nearrow u$ and $\mathbf{s} \searrow \mathbf{t} \nearrow \mathbf{s} \searrow u$. By Theorem \ref{teo3.18}, $UDU_{\mathbf{s}, u}= DUD_{\mathbf{s},u}$, so they are equivalent.\\
		$\bullet$ Second case: If $p_a=q_a=\mathbf{t}$ or $p_z=q_z=\mathbf{s}$ or $p_z=q_z=\mathbf{t}$, we repeat a similar analysis as in the first case and conclude that $f(p)=f(q)$.\\  
		$\bullet$ Third case: Now we study the case $p_a=q_a=u$ and  $p_z=q_z=v$, with $u$ and $v$ being vertices that are neither $\mathbf{s}$ nor $\mathbf{t}$. There are four possible cases for the paths $p, q$.
	
		\begin{enumerate}
			\item $u\nearrow \mathbf{s}\searrow \mathbf{t} \nearrow \mathbf{s} \searrow v$	
			\item $u\searrow \mathbf{t} \nearrow \mathbf{s} \searrow v$
			\item $u\nearrow \mathbf{s} \searrow \mathbf{t} \nearrow v$
			\item $u\searrow \mathbf{t} \nearrow \mathbf{s} \searrow \mathbf{t} \nearrow v$
		\end{enumerate}
		
		The equation $UDU_{u, \mathbf{s}}=DUD_{u,\mathbf{s}}$ implies that the first is equivalent to the second, $UDU_{u, v}= DUD_{u,v}$ implies that the second is equivalent to the third, and  $UDU_{u, \mathbf{t}}=DUD_{u,\mathbf{t}}$ implies that the third is equivalent to the fourth. 
	\end{proof}
	
	The FPC would follow if we could guarantee the existence of a direct subpath in any path, but this is not the case. Despite this, we will show that in $\Gamma_{w_{0,4}}$, any complete path will always contain a subpath that is equivalent to a direct path, proving the conjecture for this element. 
	
	\subsection{Diagrammatic calculus}\label{diagrammaticcalculus}
	
	We begin by considering a complete path $p$ in $\Gamma_{w_{0,4}}$ and its path morphism $f(p)$. Since the path is complete, the vertices $\mathbf{s}$ and $\mathbf{t}$ are part of the path.
	Note that it could be possible to visit these points multiple times.
	So there is at least one subpath, that we will call {\it candidate path}, starting in $\mathbf{s}$  and ending in $\mathbf{t}$, or starting in $\mathbf{t}$ and ending in $\mathbf{s}$, minimal with this property. This means that there are no proper subpaths of the candidate path starting in $\mathbf{s}$ and ending in $\mathbf{t}$, or starting in $\mathbf{t}$ and ending in $\mathbf{s}$.
	\\ 
	Without loss of generality we will assume that the candidate path starts  in $\mathbf{s}$ and ends in $\mathbf{t}$.	
	Since the Zamolodchikov cycle has a ``ring shape'' (see Figure \ref{confZamCycle}), our conditions imply that the candidate path will be hosted either in the left or in the right half of this cycle. Let us suppose without loss of generality that the candidate path is in the left half. Let us consider the following path which represents the desired direct subpath.
	
	\begin{equation} \label{eq:6}
		\mathbf{s}\searrow A\searrow B \searrow C \searrow \mathbf{t}
	\end{equation}
	
	This is a path from $\mathbf{s}$ to $\mathbf{t}$ where $\mathbf{s}=121321$, $A=212321$, $B = 213231 = 231231 = 213213 = 231213$, $C=232123$, $\mathbf{t}=323123$.
	
	Recall from Def. \ref{path} that for any path $p$ we denote by $[p]$ the associated sequence of vertices. The beginning of our candidate path $k$ is from $\mathbf{s}$ to $A$. We cannot return to $\mathbf{s}$ by the minimality of the candidate path. So we have $[k]=[\mathbf{s}, A, B,\ldots, \mathbf{t}]$.  
	Once we are in $B$ we can go back to $A$ or go forward to $C$. If we go back to $A$, since we cannot return to $\mathbf{s}$, we have to return to $B$. 
	
	\begin{lemma}{\label{propSingle}}
		For ${\bf s}, A, B$ as in path (\ref{eq:6}), we have $[A, B, A, B] \simeq  [A, B]$ (so $[\mathbf{s}, A, B]\simeq [\mathbf{s}, A, B,A,B] \simeq [\mathbf{s}, A, B, A, B, A, B]$, and so on). Also, $[B, A, B, A] \simeq  [B, A]$.
	\end{lemma}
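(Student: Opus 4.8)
The plan is to observe that both equivalences are \emph{local}: the oriented edge $A\searrow B$ is the single adjacent braid move $232\to 323$ carried out in the middle three letters of $A=212321$, with all other letters untouched. Concretely, using the distant-edge identification of Section~\ref{distantEdgesIdentification} I would represent the cloud $B$ by the reduced expression $213231$ that is adjacent to $A$, so that $f_{A\searrow B}=\mathrm{Id}_{B_2}\otimes\mathrm{Id}_{B_1}\otimes f_{23}\otimes\mathrm{Id}_{B_1}$ and $f_{B\nearrow A}=\mathrm{Id}_{B_2}\otimes\mathrm{Id}_{B_1}\otimes f_{32}\otimes\mathrm{Id}_{B_1}$, where $f_{23}\colon B_2B_3B_2\to B_3B_2B_3$ and $f_{32}\colon B_3B_2B_3\to B_2B_3B_2$ are the braid morphisms of Section~\ref{fsr}. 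Tensoring away the outer identities, the first claim $[A,B,A,B]\simeq[A,B]$ becomes the identity $f_{23}\circ f_{32}\circ f_{23}=f_{23}$, and the second claim $[B,A,B,A]\simeq[B,A]$ becomes $f_{32}\circ f_{23}\circ f_{32}=f_{32}$.

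To prove $f_{23}\circ f_{32}\circ f_{23}=f_{23}$ I would check equality on the two elements $1^{\otimes}$ and $1\otimes x_2\otimes 1\otimes 1$, which generate $B_2B_3B_2$ as an $(R,R)$-bimodule and by which $f_{23}$ is determined in Section~\ref{fsr}. On $1^{\otimes}$ the statement is immediate, since every braid morphism fixes $1^{\otimes}$. On $1\otimes x_2\otimes 1\otimes 1$, formula~(\ref{eq1}) gives $f_{23}(1\otimes x_2\otimes 1\otimes 1)=(x_2+x_3)\otimes 1\otimes 1\otimes 1-1\otimes 1\otimes 1\otimes x_4$; the key observation is that this is a sum of a left-$R$-multiple $(x_2+x_3)\cdot 1^{\otimes}$ and a right-$R$-multiple $1^{\otimes}\cdot x_4$ of $1^{\otimes}$. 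Since $f_{32}$ and the outer $f_{23}$ are $(R,R)$-bimodule maps sending $1^{\otimes}$ to $1^{\otimes}$, they both fix this element, so the triple composite returns exactly $f_{23}(1\otimes x_2\otimes 1\otimes 1)$. The second identity is the mirror computation: by~(\ref{eq2}), $f_{32}(1\otimes x_4\otimes 1\otimes 1)=1\otimes 1\otimes 1\otimes(x_3+x_4)-x_2\otimes 1\otimes 1\otimes 1$, again a combination of bimodule-multiples of $1^{\otimes}$, hence preserved by the subsequent $f_{23}$ and $f_{32}$.

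Reinstating the outer identities yields $f_{A\searrow B}\circ f_{B\nearrow A}\circ f_{A\searrow B}=f_{A\searrow B}$ and its mirror, which are precisely the two asserted equivalences. The chain $[\mathbf s,A,B]\simeq[\mathbf s,A,B,A,B]\simeq\cdots$ then follows by precomposing with $f_{\mathbf s\searrow A}$ and iterating. I would also note, as a sanity check, that $f_{23}\circ f_{32}\circ f_{23}=f_{23}$ is nothing but the rank-two instance of $Z\circ\overline{Z}\circ Z=Z$ from~(\ref{ZZZ}) applied to the parabolic subgroup $\langle s_2,s_3\rangle$, whose Zamolodchikov cycle is the single edge $232\to 323$; this gives an alternative, computation-free derivation.

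The only non-formal input, and the step I expect to require the most care, is the reduction itself: fixing the correct cloud representative so that $A\searrow B$ is a \emph{pure} adjacent move in positions $3$--$5$ (with the orientation $232\to 323$ matching $A\searrow B$), and invoking that $1^{\otimes}$ together with $1\otimes x_2\otimes 1\otimes 1$ generate the bimodule, so that agreement on them forces equality of morphisms. Once this is in place, the recognition that the nontrivial image is already a bimodule-multiple of $1^{\otimes}$ makes the remaining applications of the braid maps trivial, so no second use of~(\ref{eq1}) or~(\ref{eq2}) is needed.
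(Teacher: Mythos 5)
Your proposal is correct, but it takes a genuinely different route from the paper. The paper proves $[A,B,A,B]\simeq[A,B]$ diagrammatically: it expands the composite $f_{32}\circ f_{23}$ via the two-colour relation (Eq.~2.27 of \cite{elias2016thicker}) into the identity plus a correction term (Figure~\ref{fig3.2}), decomposes that correction further by Eq.~2.26 (Figure~\ref{fig3.3}), and kills the resulting summands with the dot--trivalent relations of \cite{elias2010diagrammatics}. You instead reduce, exactly as the paper implicitly does, to the single-spot identity $f_{23}\circ f_{32}\circ f_{23}=f_{23}$ and verify it by evaluating on the two $(R,R)$-bimodule generators $1^{\otimes}$ and $1\otimes x_2\otimes1\otimes1$ of $B_2B_3B_2$ from Section~\ref{fsr}; the key point that $f_{23}(1\otimes x_2\otimes1\otimes1)=(x_2+x_3)\cdot1^{\otimes}-1^{\otimes}\cdot x_4$ already lies in the sub-bimodule generated by $1^{\otimes}$, on which every braid morphism acts as the identity, is correct and makes the remaining compositions trivial. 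Your computation for the mirror identity via Equation~(\ref{eq2}) is likewise correct. What each approach buys: yours is more elementary and self-contained given the formulas of Section~\ref{fsr}, while the paper's diagrammatic decomposition is not wasted effort --- Figures~\ref{fig3.2} and~\ref{fig3.3} are reused verbatim in the proofs of Proposition~\ref{propDoble} and Proposition~\ref{prop3.12}, so the diagrammatic route amortizes better over the rest of Section~\ref{sectFPC}. Two minor caveats: your reduction tacitly uses that changing the cloud representative of $B$ only pre/post-composes both sides with the same isomorphism of distant moves (this is Remark~\ref{remarkpi}, worth citing explicitly); and your ``computation-free'' aside invoking $Z\circ\overline{Z}\circ Z=Z$ for the parabolic $\langle s_2,s_3\rangle$ is morally right but Theorem~\ref{teo3.18} is stated only for $w_{0,n}$ of the full symmetric group, so that derivation would need a word about restriction to parabolic sub-systems.
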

	
	\begin{proof}
		This is a well-known identity. Such composition of morphisms can be represented and decomposed as 
		illustrated in Figure \ref{fig3.2} below. We will use black rectangular frames to highlight spots where we use local relations. Blue, red, and green correspond to indexes 1, 2, and 3 respectively. The following is a consequence of  \cite[Eq. 2.27]{elias2016thicker}
		
		\begin{figure}[H]
			\centering
			\tikzset{every picture/.style={line width=1pt}} 
			\begin{tikzpicture}[x=0.3pt,y=0.3pt,yscale=-1,xscale=1]
				
				\draw [color={rgb, 255:red, 255; green, 0; blue, 0  }  ,draw opacity=1 ][line width=1.75]    (100,180) -- (80,200) ;
				\draw [color={rgb, 255:red, 126; green, 211; blue, 33 }  ,draw opacity=1 ][line width=1.75]    (100,220) .. controls (120,240) and (120,240) .. (120,260) ;
				\draw [color={rgb, 255:red, 126; green, 211; blue, 33 }  ,draw opacity=1 ][line width=1.75]    (60,220) .. controls (40,240) and (40,240) .. (40,260) ;
				\draw [color={rgb, 255:red, 255; green, 0; blue, 0 }  ,draw opacity=1 ][line width=1.75]    (80,200) -- (80,320) ;
				\draw [color={rgb, 255:red, 255; green, 0; blue, 0 }  ,draw opacity=1 ][line width=1.75]    (60,180) -- (80,200) ;
				\draw [color={rgb, 255:red, 255; green, 0; blue, 0 }  ,draw opacity=1 ][line width=1.75]    (80,320) -- (60,340) ;
				\draw [color={rgb, 255:red, 255; green, 0; blue, 0 }  ,draw opacity=1 ][line width=1.75]    (80,320) -- (100,340) -- (100,340) ;
				\draw [color={rgb, 255:red, 255; green, 0; blue, 0 }  ,draw opacity=1 ][line width=1.75]    (100,340) .. controls (120,360) and (120,360) .. (120,380) ;
				\draw [color={rgb, 255:red, 255; green, 0; blue, 0 }  ,draw opacity=1 ][line width=1.75]    (60,340) .. controls (40,360) and (40,360) .. (40,380) ;
				\draw [color={rgb, 255:red, 126; green, 211; blue, 33 }  ,draw opacity=1 ][line width=1.75]    (80,320) -- (100,300) ;
				\draw [color={rgb, 255:red, 126; green, 211; blue, 33 }  ,draw opacity=1 ][line width=1.75]    (100,300) .. controls (120,280) and (120,280) .. (120,260) ;
				\draw [color={rgb, 255:red, 126; green, 211; blue, 33 }  ,draw opacity=1 ][line width=1.75]    (80,320) -- (60,300) -- (60,300) ;
				\draw [color={rgb, 255:red, 126; green, 211; blue, 33 }  ,draw opacity=1 ][line width=1.75]    (60,300) .. controls (40,280) and (40,280) .. (40,260) ;
				\draw [color={rgb, 255:red, 255; green, 0; blue, 0 }  ,draw opacity=1 ][line width=1.75]    (100,180) .. controls (120,160) and (120,160) .. (120,140) ;
				\draw [color={rgb, 255:red, 126; green, 211; blue, 33 }  ,draw opacity=1 ][line width=1.75]    (80,140) -- (80,200) ;
				\draw [color={rgb, 255:red, 126; green, 211; blue, 33 }  ,draw opacity=1 ][line width=1.75]    (80,200) -- (100,220) -- (100,220) ;
				\draw [color={rgb, 255:red, 126; green, 211; blue, 33 }  ,draw opacity=1 ][line width=1.75]    (80,200) -- (60,220) ;
				\draw [color={rgb, 255:red, 126; green, 211; blue, 33 }  ,draw opacity=1 ][line width=1.75]    (80,320) -- (80,380) ;
				\draw [color={rgb, 255:red, 255; green, 0; blue, 0  }  ,draw opacity=1 ][line width=1.75]    (60,180) .. controls (40,160) and (40,160) .. (40,140) ;
				\draw [color={rgb, 255:red, 255; green, 0; blue, 0 }  ,draw opacity=1 ][line width=1.75]    (80,20) -- (80,80) ;
				\draw [color={rgb, 255:red, 255; green, 0; blue, 0 }  ,draw opacity=1 ][line width=1.75]    (100,100) -- (80,80) ;
				\draw [color={rgb, 255:red, 126; green, 211; blue, 33 }  ,draw opacity=1 ][line width=1.75]    (100,60) .. controls (120,40) and (120,40) .. (120,20) ;
				\draw [color={rgb, 255:red, 126; green, 211; blue, 33 }  ,draw opacity=1 ][line width=1.75]    (60,60) .. controls (40,40) and (40,40) .. (40,20) ;
				\draw [color={rgb, 255:red, 255; green, 0; blue, 0 }  ,draw opacity=1 ][line width=1.75]    (60,100) -- (80,80) ;
				\draw [color={rgb, 255:red, 255; green, 0; blue, 0 }  ,draw opacity=1 ][line width=1.75]    (100,100) .. controls (120,120) and (120,120) .. (120,140) ;
				\draw [color={rgb, 255:red, 126; green, 211; blue, 33 }  ,draw opacity=1 ][line width=1.75]    (80,140) -- (80,80) ;
				\draw [color={rgb, 255:red, 126; green, 211; blue, 33 }  ,draw opacity=1 ][line width=1.75]    (80,80) -- (100,60) -- (100,60) ;
				\draw [color={rgb, 255:red, 126; green, 211; blue, 33 }  ,draw opacity=1 ][line width=1.75]    (80,80) -- (60,60) ;
				\draw [color={rgb, 255:red, 255; green, 0; blue, 0 }  ,draw opacity=1 ][line width=1.75]    (60,100) .. controls (40,120) and (40,120) .. (40,140) ;
				\draw   (20,140) -- (140,140) -- (140,380) -- (20,380) -- cycle ;
				\draw [color={rgb, 255:red, 255; green, 0; blue, 0 }  ,draw opacity=1 ][line width=1.75]    (220,20) -- (220,80) ;
				\draw [color={rgb, 255:red, 255; green, 0; blue, 0 }  ,draw opacity=1 ][line width=1.75]    (240,100) -- (220,80) ;
				\draw [color={rgb, 255:red, 126; green, 211; blue, 33 }  ,draw opacity=1 ][line width=1.75]    (240,60) .. controls (260,40) and (260,40) .. (260,20) ;
				\draw [color={rgb, 255:red, 126; green, 211; blue, 33 }  ,draw opacity=1 ][line width=1.75]    (200,60) .. controls (180,40) and (180,40) .. (180,20) ;
				\draw [color={rgb, 255:red, 255; green, 0; blue, 0 }  ,draw opacity=1 ][line width=1.75]    (200,100) -- (220,80) ;
				\draw [color={rgb, 255:red, 255; green, 0; blue, 0 }  ,draw opacity=1 ][line width=1.75]    (240,100) .. controls (260,120) and (260,120) .. (260,140) ;
				\draw [color={rgb, 255:red, 126; green, 211; blue, 33 }  ,draw opacity=1 ][line width=1.75]    (220,140) -- (220,80) ;
				\draw [color={rgb, 255:red, 126; green, 211; blue, 33 }  ,draw opacity=1 ][line width=1.75]    (220,80) -- (240,60) ;
				\draw [color={rgb, 255:red, 126; green, 211; blue, 33 }  ,draw opacity=1 ][line width=1.75]    (220,80) -- (200,60) ;
				\draw [color={rgb, 255:red, 255; green, 0; blue, 0 }  ,draw opacity=1 ][line width=1.75]    (200,100) .. controls (180,120) and (180,120) .. (180,140) ;
				\draw [color={rgb, 255:red, 255; green, 0; blue, 0 }  ,draw opacity=1 ][line width=1.75]    (180,140) -- (180,380) ;
				\draw [color={rgb, 255:red, 126; green, 211; blue, 33}  ,draw opacity=1 ][line width=1.75]    (220,380) -- (220,140) ;
				\draw [color={rgb, 255:red, 255; green, 0; blue, 0 }  ,draw opacity=1 ][line width=1.75]    (260,140) -- (260,380) ;
				\draw [color={rgb, 255:red, 255; green, 0; blue, 0 }  ,draw opacity=1 ][line width=1.75]    (400,180) -- (380,200) ;
				\draw [color={rgb, 255:red, 255; green, 0; blue, 0 }  ,draw opacity=1 ][line width=1.75]    (380,200) -- (380,320) ;
				\draw [color={rgb, 255:red, 255; green, 0; blue, 0 }  ,draw opacity=1 ][line width=1.75]    (360,180) -- (380,200) ;
				\draw [color={rgb, 255:red, 255; green, 0; blue, 0 }  ,draw opacity=1 ][line width=1.75]    (380,320) -- (360,340) ;
				\draw [color={rgb, 255:red, 255; green, 0; blue, 0 }  ,draw opacity=1 ][line width=1.75]    (380,320) -- (400,340) -- (400,340) ;
				\draw [color={rgb, 255:red, 255; green, 0; blue, 0 }  ,draw opacity=1 ][line width=1.75]    (400,340) .. controls (420,360) and (420,360) .. (420,380) ;
				\draw [color={rgb, 255:red, 255; green, 0; blue, 0 }  ,draw opacity=1 ][line width=1.75]    (360,340) .. controls (340,360) and (340,360) .. (340,380) ;
				\draw [color={rgb, 255:red, 255; green, 0; blue, 0 }  ,draw opacity=1 ][line width=1.75]    (400,180) .. controls (420,160) and (420,160) .. (420,140) ;
				\draw [color={rgb, 255:red, 126; green, 211; blue, 33 }  ,draw opacity=1 ][line width=1.75]    (380,360) -- (380,380) ;
				\draw [shift={(380,360)}, rotate = 90] [color={rgb, 255:red, 126; green, 211; blue, 33 }  ,draw opacity=1 ][fill={rgb, 255:red, 126; green, 211; blue, 33 }  ,fill opacity=1 ][line width=1.75]      (0, 0) circle [x radius= 4.62, y radius= 4.62]   ;
				\draw [color={rgb, 255:red, 255; green, 0; blue, 0  }  ,draw opacity=1 ][line width=1.75]    (360,180) .. controls (340,160) and (340,160) .. (340,140) ;
				\draw [color={rgb, 255:red, 255; green, 0; blue, 0  }  ,draw opacity=1 ][line width=1.75]    (380,20) -- (380,80) ;
				\draw [color={rgb, 255:red, 255; green, 0; blue, 0 }  ,draw opacity=1 ][line width=1.75]    (400,100) -- (380,80) ;
				\draw [color={rgb, 255:red, 126; green, 211; blue, 33 }  ,draw opacity=1 ][line width=1.75]    (400,60) .. controls (420,40) and (420,40) .. (420,20) ;
				\draw [color={rgb, 255:red, 126; green, 211; blue, 33 }  ,draw opacity=1 ][line width=1.75]    (360,60) .. controls (340,40) and (340,40) .. (340,20) ;
				\draw [color={rgb, 255:red, 255; green, 0; blue, 0 }  ,draw opacity=1 ][line width=1.75]    (360,100) -- (380,80) ;
				\draw [color={rgb, 255:red, 255; green, 0; blue, 0 }  ,draw opacity=1 ][line width=1.75]    (400,100) .. controls (420,120) and (420,120) .. (420,140) ;
				\draw [color={rgb, 255:red, 126; green, 211; blue, 33 }  ,draw opacity=1 ][line width=1.75]    (380,160) -- (380,80) ;
				\draw [shift={(380,160)}, rotate = 270] [color={rgb, 255:red, 126; green, 211; blue, 33 }  ,draw opacity=1 ][fill={rgb, 255:red, 126; green, 211; blue, 33 }  ,fill opacity=1 ][line width=1.75]      (0, 0) circle [x radius= 4.62, y radius= 4.62]   ;
				\draw [color={rgb, 255:red, 126; green, 211; blue, 33 }  ,draw opacity=1 ][line width=1.75]    (380,80) -- (400,60) ;
				\draw [color={rgb, 255:red, 126; green, 211; blue, 33 }  ,draw opacity=1 ][line width=1.75]    (380,80) -- (360,60) ;
				\draw [color={rgb, 255:red, 255; green, 0; blue, 0 }  ,draw opacity=1 ][line width=1.75]    (360,100) .. controls (340,120) and (340,120) .. (340,140) ;
				\draw   (320,20) -- (440,20) -- (440,280) -- (320,280) -- cycle ;
				
				\draw (140,200) node [anchor=north west][inner sep=0.75pt]    {$=$};
				\draw (275,195) node [anchor=north west][inner sep=0.75pt]    {$+$};
				
			\end{tikzpicture}
			\caption{}\label{fig3.2}
		\end{figure}
		
		The part inside the rectangle in the second summand in Figure \ref{fig3.2} is decomposed as in Figure \ref{fig3.3}, by  \cite[Eq. 2.26]{elias2016thicker}.
		
		\begin{figure}[H]
			\centering
			\tikzset{every picture/.style={line width=1pt}} 
			\begin{tikzpicture}[x=0.3pt,y=0.3pt,yscale=-1,xscale=1]
				
				\draw [color={rgb, 255:red, 255; green, 0; blue, 0 }  ,draw opacity=1 ][line width=1.75]    (80,180) .. controls (100,160) and (100,160) .. (100,140) ;
				\draw [color={rgb, 255:red, 255; green, 0; blue, 0 }  ,draw opacity=1 ][line width=1.75]    (40,180) .. controls (20,160) and (20,160) .. (20,140) ;
				\draw [color={rgb, 255:red, 255; green, 0; blue, 0 }  ,draw opacity=1 ][line width=1.75]    (60,20) -- (60,80) ;
				\draw [color={rgb, 255:red, 255; green, 0; blue, 0 }  ,draw opacity=1 ][line width=1.75]    (80,100) -- (60,80) ;
				\draw [color={rgb, 255:red, 126; green, 211; blue, 33 }  ,draw opacity=1 ][line width=1.75]    (80,60) .. controls (100,40) and (100,40) .. (100,20) ;
				\draw [color={rgb, 255:red, 126; green, 211; blue, 33 }  ,draw opacity=1 ][line width=1.75]    (40,60) .. controls (20,40) and (20,40) .. (20,20) ;
				\draw [color={rgb, 255:red, 255; green, 0; blue, 0 }  ,draw opacity=1 ][line width=1.75]    (40,100) -- (60,80) ;
				\draw [color={rgb, 255:red, 255; green, 0; blue, 0 }  ,draw opacity=1 ][line width=1.75]    (80,100) .. controls (100,120) and (100,120) .. (100,140) ;
				\draw [color={rgb, 255:red, 126; green, 211; blue, 33 }  ,draw opacity=1 ][line width=1.75]    (60,120) -- (60,80) ;
				\draw [shift={(60,120)}, rotate = 270.64] [color={rgb, 255:red, 126; green, 211; blue, 33 }  ,draw opacity=1 ][fill={rgb, 255:red, 126; green, 211; blue, 33 }  ,fill opacity=1 ][line width=1.75]      (0, 0) circle [x radius= 4.62, y radius= 4.62]   ;
				\draw [color={rgb, 255:red, 126; green, 211; blue, 33 }  ,draw opacity=1 ][line width=1.75]   (60,80) -- (80,60) ;
				\draw [color={rgb, 255:red, 126; green, 211; blue, 33 }  ,draw opacity=1 ][line width=1.75]    (60,80)-- (40,60) ;
				\draw [color={rgb, 255:red, 255; green, 0; blue, 0 }  ,draw opacity=1 ][line width=1.75]    (40,100) .. controls (20,120) and (20,120) .. (20,140) ;
				\draw [color={rgb, 255:red, 255; green, 0; blue, 0 }  ,draw opacity=1 ][line width=1.75]    (80,180) -- (60,200) ;
				\draw [color={rgb, 255:red, 255; green, 0; blue, 0 }  ,draw opacity=1 ][line width=1.75]    (60,200) -- (60,280) ;
				\draw [color={rgb, 255:red, 255; green, 0; blue, 0 }  ,draw opacity=1 ][line width=1.75]    (40,180) -- (60,200) ;
				\draw [color={rgb, 255:red, 255; green, 0; blue, 0  }  ,draw opacity=1 ][line width=1.75]    (320,100) .. controls (300,120) and (300,120) .. (300,140) ;
				\draw [color={rgb, 255:red, 255; green, 0; blue, 0 }  ,draw opacity=1 ][line width=1.75]    (360,100) .. controls (380,120) and (380,120) .. (380,140) ;
				\draw [color={rgb, 255:red, 255; green, 0; blue, 0  }  ,draw opacity=1 ][line width=1.75]    (220,180) .. controls (240,160) and (240,160) .. (240,140) ;
				\draw [color={rgb, 255:red, 255; green, 0; blue, 0 }  ,draw opacity=1 ][line width=1.75]    (180,180) .. controls (160,160) and (160,160) .. (160,140) ;
				\draw [color={rgb, 255:red, 126; green, 211; blue, 33 }  ,draw opacity=1 ][line width=1.75]    (320,60) .. controls (300,40) and (300,40) .. (300,20) ;
				\draw [color={rgb, 255:red, 126; green, 211; blue, 33 }  ,draw opacity=1 ][line width=1.75]    (360,60) .. controls (380,40) and (380,40) .. (380,20) ;
				\draw [color={rgb, 255:red, 255; green, 0; blue, 0 }  ,draw opacity=1 ][line width=1.75]    (220,180) -- (200,200) ;
				\draw [color={rgb, 255:red, 255; green, 0; blue, 0 }  ,draw opacity=1 ][line width=1.75]    (200,200) -- (200,280) ;
				\draw [color={rgb, 255:red, 255; green, 0; blue, 0 }  ,draw opacity=1 ][line width=1.75]    (180,180) -- (200,200) ;
				\draw [color={rgb, 255:red, 126; green, 211; blue, 33 }  ,draw opacity=1 ][line width=1.75]    (160,60) -- (160,20) ;
				\draw [shift={(160,60.17)}, rotate = 270.64] [color={rgb, 255:red, 126; green, 211; blue, 33 }  ,draw opacity=1 ][fill={rgb, 255:red, 126; green, 211; blue, 33 }  ,fill opacity=1 ][line width=1.75]      (0, 0) circle [x radius= 4.62, y radius= 4.62]   ;
				\draw [color={rgb, 255:red, 126; green, 211; blue, 33 }  ,draw opacity=1 ][line width=1.75]    (240,60) -- (240,20) ;
				\draw [shift={(240,60.17)}, rotate = 270.64] [color={rgb, 255:red, 126; green, 211; blue, 33 }  ,draw opacity=1 ][fill={rgb, 255:red, 126; green, 211; blue, 33 }  ,fill opacity=1 ][line width=1.75]      (0, 0) circle [x radius= 4.62, y radius= 4.62]   ;
				\draw [color={rgb, 255:red, 255; green, 0; blue, 0 }  ,draw opacity=1 ][line width=1.75]    (200,20) -- (200,80) ;
				\draw [color={rgb, 255:red, 255; green, 0; blue, 0 }  ,draw opacity=1 ][line width=1.75]    (220,100) -- (200,80) ;
				\draw [color={rgb, 255:red, 255; green, 0; blue, 0 }  ,draw opacity=1 ][line width=1.75]    (180,100) -- (200,80) ;
				\draw [color={rgb, 255:red, 255; green, 0; blue, 0  }  ,draw opacity=1 ][line width=1.75]    (220,100) .. controls (240,120) and (240,120) .. (240,140) ;
				\draw [color={rgb, 255:red, 255; green, 0; blue, 0  }  ,draw opacity=1 ][line width=1.75]    (180,100) .. controls (160,120) and (160,120) .. (160,140) ;
				\draw [color={rgb, 255:red, 255; green, 0; blue, 0 }  ,draw opacity=1 ][line width=1.75]    (340,60) -- (340,20) ;
				\draw [shift={(340,55)}, rotate = 270.64] [color={rgb, 255:red, 255; green, 0; blue, 0   }  ,draw opacity=1 ][fill={rgb, 255:red, 255; green, 0; blue, 0  }  ,fill opacity=1 ][line width=1.75]      (0, 0) circle [x radius= 4.62, y radius= 4.62]   ;
				\draw [color={rgb, 255:red, 126; green, 211; blue, 33 }  ,draw opacity=1 ][line width=1.75]    (320,60) .. controls (340,80) and (340,80) .. (360,60) ;
				\draw [color={rgb, 255:red, 255; green, 0; blue, 0  }  ,draw opacity=1 ][line width=1.75]    (360,100) .. controls (340,80) and (340,80) .. (320,100) ;
				\draw [color={rgb, 255:red, 255; green, 0; blue, 0  }  ,draw opacity=1 ][line width=1.75]    (360,180) .. controls (380,160) and (380,160) .. (380,140) ;
				\draw [color={rgb, 255:red, 255; green, 0; blue, 0 }  ,draw opacity=1 ][line width=1.75]    (320,180) .. controls (300,160) and (300,160) .. (300,140) ;
				\draw [color={rgb, 255:red, 255; green, 0; blue, 0 }  ,draw opacity=1 ][line width=1.75]    (360,180) -- (340,200) ;
				\draw [color={rgb, 255:red, 255; green, 0; blue, 0  }  ,draw opacity=1 ][line width=1.75]    (340,200) -- (340,280) ;
				\draw [color={rgb, 255:red, 255; green, 0; blue, 0  }  ,draw opacity=1 ][line width=1.75]    (320,180) -- (340,200) ;
				\draw  [line width=0.75]  (35,30) -- (85,30) -- (85,150) -- (35,150) -- cycle ;
				\draw (110,80) node [anchor=north west][inner sep=0.75pt]    {$=$};
				\draw (250,80) node [anchor=north west][inner sep=0.75pt]    {$+$};
				
			\end{tikzpicture}
			\caption{}
			\label{fig3.3}
		\end{figure}
		Each summand in the right-hand side is zero by \cite[Eq. 3.18 and 3.20]{elias2010diagrammatics}.\\
		Reading the diagrams upside down we conclude that $[B, A, B, A] \simeq  [B, A]$.
	\end{proof}
	
	Note that, using the same diagrams (but with different colors), we can also find the equivalence
	\begin{equation*}
		[B, C] \simeq [B, C, B, C].
	\end{equation*}
	
	Without loss of generality we will assume that our candidate path  has minimal length when compared to all its equivalent paths. Because of this, the candidate path 	
	$[\mathbf{s}, A, B,\ldots, \mathbf{t}]$ has no subsequences of the forms $[A,B,A,B]$ and $[B,C,B,C]$. So we can assume that our candidate path starts with $[\mathbf{s}, A, B, C]$. Being at $C$, if we go to $\mathbf{t}$ we are done. We will find a contradiction if this is not the case. 
	
	If we don't go to $\mathbf{t}$, the path  returns to $B$. From $B$ we can not return to $C$, because we would have $[B, C, B, C]$ as a subpath. Thus, from $B$ we go to $A$. Since we cannot return to $\mathbf{s}$ we have to go to $B$. So our path starts as follows $[\mathbf{s}, A, B, C, B, A, B]$.  Again, by minimality of the length of the candidate path, the next vertex has to be $C$. The following proposition proves the contradiction. 
	
	\begin{proposition}\label{propDoble}
		For $ A, B, C$ as in path \ref{eq:6}, we have $$[A, B, C, B, A, B, C] \simeq [A, B, C]$$ 
	\end{proposition}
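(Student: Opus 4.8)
The plan is to factor the long path morphism through $B_B$ and reduce the statement to a single commutation relation. Reading the six edges of $[A,B,C,B,A,B,C]$ in order and composing the associated braid morphisms, we obtain
\[ f([A,B,C,B,A,B,C]) = f([B,C])\circ\alpha\circ\beta\circ f([A,B]), \]
where $\alpha := f([B,A,B])$ (the bounce up to $A$ and back) and $\beta := f([B,C,B])$ (the bounce down to $C$ and back) are endomorphisms of $B_B$. Since the target equals $f([A,B,C]) = f([B,C])\circ f([A,B])$, it suffices to show that inserting $\alpha\circ\beta$ in the middle is harmless.

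Two absorption identities are already at hand. From $[A,B,A,B]\simeq[A,B]$ (Lemma \ref{propSingle}) and concatenation, $\alpha\circ f([A,B]) = f([A,B,A,B]) = f([A,B])$; dually, from the relation $[B,C]\simeq[B,C,B,C]$ recorded right after Lemma \ref{propSingle}, $f([B,C])\circ\beta = f([B,C,B,C]) = f([B,C])$. Hence, provided $\alpha$ and $\beta$ commute,
\[ f([A,B,C,B,A,B,C]) = f([B,C])\circ\beta\circ\alpha\circ f([A,B]) = \bigl(f([B,C])\circ\beta\bigr)\circ\bigl(\alpha\circ f([A,B])\bigr) = f([B,C])\circ f([A,B]), \]
which is exactly $f([A,B,C])$, as desired.

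Everything is thus reduced to the commutation $\alpha\circ\beta = \beta\circ\alpha$, equivalently $[B,C,B,A,B]\simeq[B,A,B,C,B]$, and this is the step I expect to be the main obstacle. It is not a formal consequence of idempotence: the same concatenation computations (using $[B,A,B,A]\simeq[B,A]$ and $[B,C]\simeq[B,C,B,C]$) show that $\alpha$ and $\beta$ are idempotent endomorphisms of $B_B$, but two idempotents need not commute. Indeed, choosing a representative of the cloud $B$ adjacent to $A$ and one adjacent to $C$, the upward and downward bounces act on overlapping strands (they share the color-$2$ strand), so no disjointness argument applies. I would instead establish the commutation by drawing $\alpha\circ\beta$ and $\beta\circ\alpha$ as diagrams and simplifying both with the local relations already used in Lemma \ref{propSingle}, namely \cite[Eq. 2.26, 2.27]{elias2016thicker} together with \cite[Eq. 3.18, 3.20]{elias2010diagrammatics}, reading the resulting bigons upside down exactly as in that lemma to identify the two diagrams.
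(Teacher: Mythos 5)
Your factorization is correct: reading off the six edges of $[A,B,C,B,A,B,C]$ does give $f([B,C])\circ\alpha\circ\beta\circ f([A,B])$ with $\alpha=f([B,A,B])$ and $\beta=f([B,C,B])$, and the two absorption identities $\alpha\circ f([A,B])=f([A,B])$ and $f([B,C])\circ\beta=f([B,C])$ follow from Lemma \ref{propSingle} and the remark immediately after it, exactly as you say. But the argument stops precisely where the proposition begins: after this reduction the commutation $\alpha\circ\beta=\beta\circ\alpha$ carries the entire content of the statement, and you offer only a plan for proving it ("I would instead establish the commutation by drawing \dots"). That is a genuine gap, not a routine verification to be deferred.

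It is a gap one should be especially nervous about, because a commutation of exactly this shape is what the paper's counterexample refutes. In $\Gamma_{12321}$, a three-vertex line $\mathbf{s}\rightarrow c\rightarrow\mathbf{t}$, the paths $v_1$ and $v_2$ of Section \ref{counterexample} conflate to $[c,\mathbf{t},c,\mathbf{s},c]$ and $[c,\mathbf{s},c,\mathbf{t},c]$; that is, the "bounce up" and "bounce down" idempotents at the middle vertex of that graph do \emph{not} commute, even though each absorbs into the corresponding edge morphism just as your $\alpha$ and $\beta$ do. Your $\alpha$ and $\beta$ are two non-oriented bounces from a middle vertex to its two neighbours acting on overlapping strands, so the commutation cannot be inferred from BS-consistency, from Theorem \ref{teo3.18} (the bounces do not reach $\mathbf{s}$ and $\mathbf{t}$), or from idempotence; it would need a diagrammatic computation at least as long as the one it is meant to replace. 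The paper's proof sidesteps the question entirely: it expands the full composite for $[A,B,C,B,A,B,C]$ using the local relations (Eq.\ 2.26, 2.20, 2.15, 2.22 of \cite{elias2016thicker}), shows the extra summands vanish, and never establishes or uses $\alpha\circ\beta=\beta\circ\alpha$. If you want to keep your route, you must actually carry out the diagrammatic proof of the commutation; as written the proof is incomplete.
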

	\begin{proof}
		The equivalence is proved diagrammatically in Figure \ref{fig8}.
		\begin{figure}[H]
			\centering
			\tikzset{every picture/.style={line width=0.75pt}} 

			
			\caption{}\label{fig8}
		\end{figure}
		
		The local relations that we use are all from \cite{elias2016thicker}. In particular, from $1)$ to $2)$ we apply Eq. 2.26. From $2)$ to $4)$ we apply Eq. 2.20 twice. From $3)$ we obtain $5)$ and $6)$ by means of Eq. 2.26. Applying Lemma \ref{propSingle} to the term $4)$ we obtain Figure \ref{fig3.7}. 
		Using the concluding observation of the proof of Lemma \ref{propSingle}, we find that each of the summands $5)$ and $6)$ can be rewritten as a sum of two morphisms which both are zero. Hence, $5)$ and $6)$ are both zero, as shown in Figure \ref{fig3.8}.
		\begin{figure}[H]
			\centering
			\tikzset{every picture/.style={line width=0.75pt}} 

			
			\caption{}\label{fig3.8}
		\end{figure}
		From $5)$ and $6)$, to $7)$ and $8)$ we apply Eq. 2.15. From $7)$ and $8)$ to $9)$ and $10)$, we repeatedly apply Eq. 2.20 and Eq. 2.22. As in Figure \ref{fig3.3}, 
		we recognize in $9)$ and $10)$ compositions equivalent to the zero morphism.
	\end{proof}
	
	\begin{proposition}
		The Forking Path Conjecture is true for $w_{0,4}$.
	\end{proposition}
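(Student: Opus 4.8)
The plan is to reduce an arbitrary pair of complete paths to the situation already settled by Proposition \ref{dproof}. I fix two complete paths $p, q$ in $\Gamma_{w_{0,4}}$ with $p_a = q_a$ and $p_z = q_z$, and I want to conclude $f(p) = f(q)$. Neither path need contain an honest direct subpath, so my first task is to manufacture one inside each path \emph{up to equivalence}. Once that is achieved, Proposition \ref{propsimplify} converts each path into a simplified path that carries a direct subpath, and Proposition \ref{dproof} then closes the argument, since $f$ is invariant under the equivalences I use (all of which rest on BS-consistency and the identities \eqref{ZZZ} and \eqref{ZZZ2}).

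The key claim I would establish is that \emph{every} complete path in $\Gamma_{w_{0,4}}$ contains a subpath equivalent to a direct path. First I would note that completeness forces the path to visit both $\mathbf{s}$ and $\mathbf{t}$, so it contains a minimal subpath — the \emph{candidate path} $k$ — running from $\mathbf{s}$ to $\mathbf{t}$ or from $\mathbf{t}$ to $\mathbf{s}$; by symmetry assume $k$ goes from $\mathbf{s}$ to $\mathbf{t}$. Because the Zamolodchikov cycle is ring-shaped (Figure \ref{confZamCycle}), minimality prevents $k$ from revisiting $\mathbf{s}$ or $\mathbf{t}$, so $k$ is confined to a single half, say the left one, with vertices $\mathbf{s}, A, B, C, \mathbf{t}$ as in path \eqref{eq:6}. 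I may then pass to a length-minimal representative of the equivalence class of $k$. Lemma \ref{propSingle}, together with its variant $[B,C]\simeq[B,C,B,C]$, forbids the patterns $[A,B,A,B]$ and $[B,C,B,C]$, so the minimal representative must begin $[\mathbf{s}, A, B, C]$. If its next step is $\mathbf{t}$, it is exactly the direct path $\mathbf{s}\searrow A\searrow B\searrow C\searrow \mathbf{t}$ and we are done; otherwise minimality and the forbidden patterns force the continuation $[\mathbf{s}, A, B, C, B, A, B, C, \ldots]$, and Proposition \ref{propDoble} supplies $[A,B,C,B,A,B,C]\simeq[A,B,C]$, producing a strictly shorter equivalent path and contradicting length-minimality. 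Hence $k$ is equivalent to a direct path.

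With the claim in hand I would finish by assembly. Replacing the candidate subpath $k$ in $p$ by an honest direct path $d$ leaves the path morphism unchanged, since equivalent subpaths induce equivalent total paths; moreover, as $k$ occupied a single half of the ring, the replacement $p'$ still covers the opposite half through $p\setminus k$ and the vertices $\mathbf{s},A,B,C,\mathbf{t}$ through $d$, so $p'$ remains complete and now contains a genuine direct subpath. Doing the same to $q$ yields $q'$ with $p'_a = q'_a$ and $p'_z = q'_z$. Applying Proposition \ref{propsimplify} to the $\alpha$- and $\beta$-parts of $p'$ and $q'$ produces simplified paths $\hat{p}, \hat{q}$ that still contain a direct path and share endpoints, and Proposition \ref{dproof} gives $f(\hat{p}) = f(\hat{q})$. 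The chain $f(p) = f(p') = f(\hat{p}) = f(\hat{q}) = f(q') = f(q)$ then proves the claim; since $p,q$ were arbitrary, the Forking Path Conjecture holds for $w_{0,4}$.

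The genuinely hard part is concentrated in the already-proven Proposition \ref{propDoble}: it is the single diagrammatic identity that eliminates the one configuration in which a minimal candidate path could fail to be direct. Everything else — the case analysis on where $k$ sits in the ring, the reduction to minimal length via Lemma \ref{propSingle}, the verification that the replacement preserves completeness, and the splicing of equivalent subpaths — is bookkeeping that follows from the structural results of Section \ref{sectFPC}.
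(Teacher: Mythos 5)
Your proposal follows the paper's own argument essentially step for step: locate a minimal candidate subpath between $\mathbf{s}$ and $\mathbf{t}$ confined to one half of the Zamolodchikov cycle, use Lemma \ref{propSingle} and Proposition \ref{propDoble} on a length-minimal equivalent representative to show it is equivalent to a direct path, and then conclude via Propositions \ref{propsimplify} and \ref{dproof}. The only differences are cosmetic — you spell out the replacement and the preservation of completeness, which the paper leaves implicit, while the paper spells out the four symmetry cases (left/right half, $\mathbf{s}\to\mathbf{t}$ versus $\mathbf{t}\to\mathbf{s}$) that you dispatch with ``by symmetry.''
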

	
	\begin{proof}		
		By \ref{propSingle} and \ref{propDoble}, we have that any path starting at $\mathbf{s}$ and ending in  $\mathbf{t}$, that does not visit $\mathbf{s}$ or $\mathbf{t}$ in the rest of the path, and that is located on the left half of the Zamolodchikov cycle (See Figure \ref{confZamCycle}) is equivalent to a direct path. The same result is true if one considers paths from $\mathbf{s}$ to $\mathbf{t}$ or from $\mathbf{t}$ to $\mathbf{s}$, located on the left  or on the right half of the cycle. 
		
		The reason for this is that the proofs for the four cases will be the same as before, but turning the diagrams upside down for the case $\mathbf{t}$ to $\mathbf{s}$ in the left half, applying a vertical axial symmetry to the diagrams  for the right half for the case $\mathbf{s}$ to $\mathbf{t}$ and turning the diagrams upside down and applying a vertical axial symmetry to the diagrams for the case $\mathbf{t}$ to $\mathbf{s}$ on the right\footnote{ A deeper reason for this  is that we are implicitly applying some equivalences of categories. There  is a contravariant equivalence of monoidal categories $\mathbb{S}Bim\rightarrow \mathbb{S}Bim$, given by the flip (that sends a diagram to its horizontal flip) and  also an auto-equivalence of $\mathbb{S}Bim$  associated to the only non-trivial automorphism of the Dynkin diagram of type $A_n$, sending $s_i\mapsto s_{n-i+1}.$ }. 
		
		So we conclude that any complete path in $\Gamma_{w_{0,4}}$ has  a subpath that is equivalent to a direct path. 
		Therefore, by Proposition \ref{dproof} the proof of the FPC for $w_0$ in $S_4$ is complete.
	\end{proof}
	
	Now we verify the conjecture for the remaining elements in $S_4$ different from $12321$. The elements $w$ and their $\Gamma_w$ graphs oriented according to Manin-Schechtman  are given in the following table.
	
	\begin{figure}[H]
		\begin{center}
			\begin{tabular}{|c|c|c|c|c|c|}
				\hline
				e & 1 & 2 & 21 & 12 & 121\\
				$\bullet$ & $\bullet$ & $\bullet$ & $\bullet$ & $\bullet$ & $\bullet \rightarrow \bullet$ \\
				\hline
				\hline
				3&31&32&321&312&3121\\
				$\bullet$ & $\bullet$ & $\bullet$ & $\bullet$ & $\bullet$ & $\bullet \rightarrow \bullet$ \\
				\hline
				\hline
				23&231&232&2321&2312&23121\\
				$\bullet$ & $\bullet$ &$\bullet \rightarrow \bullet$ & $\bullet \rightarrow \bullet$  & $\bullet$ & $\bullet \rightarrow \bullet \rightarrow \bullet$ \\
				\hline
				\hline
				123&1231&1232&{\color{purple}12321}&12312&123121\\
				$\bullet$ & $\bullet \rightarrow \bullet$  & $\bullet \rightarrow \bullet$  & {\color{purple}$\bullet \rightarrow \bullet \rightarrow \bullet$ } & $\bullet \rightarrow \bullet \rightarrow \bullet$  & $\mathrm{Zam}$ \\
				\hline
			\end{tabular}
			\caption{Elements in $S_4$ and their conflated expression graphs.}\label{S4}
		\end{center}
	\end{figure}
	
	The last entry $\mathrm{Zam}$ is the Zamolodchikov cycle, as introduced in Example \ref{exampleManinSchechtman}. There is no need to verify the trivial graphs $\Gamma_w$ ($\bullet$), since the only possible morphism is the $\mathrm{id}$. For any $\bullet \rightarrow \bullet$ case, the proof of the Forking Path Conjecture follows easily from Lemma \ref{propSingle}. It remains to check the $\bullet \rightarrow \bullet \rightarrow \bullet$ cases. We will concentrate in the cases  23121 and 12312 because the other case is the one giving the counterexample to the Forking Path Conjecture. We now study the element 23121.
	
	\begin{proposition}\label{prop3.12}
		The Forking path conjecture is true for the element 23121.
	\end{proposition}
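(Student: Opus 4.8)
The plan is to identify $\Gamma_{23121}$ explicitly and then reuse the reduction machinery of Section~\ref{sectFPC}. The reduced expression graph of $23121$ is the five-vertex line $21321 - 23121 - 23212 - 32312 - 32132$ already drawn in the example of $21321$, whose two outer edges (those incident to $21321$ and to $32132$) are distant and whose two inner edges are adjacent. Collapsing the distant edges produces three clouds, $V_1=\{21321,23121\}$, $V_2=\{23212\}$ and $V_3=\{32312,32132\}$, so $\Gamma_{23121}$ is the oriented line $\mathbf{s}=V_1\to V_2\to V_3=\mathbf{t}$, where the edge $V_1\to V_2$ is the braid move $s_1s_2s_1\leftrightarrow s_2s_1s_2$ and the edge $V_2\to V_3$ is the braid move $s_2s_3s_2\leftrightarrow s_3s_2s_3$. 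A first elementary observation is that every complete path here already contains a direct subpath: since such a path visits both $V_1$ and $V_3$, a minimal segment joining a visit of one to a visit of the other is forced to be monotone, hence equals $[V_1,V_2,V_3]$ or $[V_3,V_2,V_1]$.

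The substance of the proof is to establish, for this element, the analogue of Theorem~\ref{teo3.18}, that is $DUD_{\mathbf{x},\mathbf{y}}=UDU_{\mathbf{x},\mathbf{y}}$ on $\Gamma_{23121}$, from which (\ref{ZZZ}) and (\ref{ZZZ2}) follow as the special case $\mathbf{x}=\mathbf{s}$, $\mathbf{y}=\mathbf{t}$. Because $V_2$ is the only non-extreme vertex, the non-trivial instances reduce to two families of local identities: the bounce relations $[V_1,V_2,V_1,V_2]\simeq[V_1,V_2]$ and $[V_2,V_3,V_2,V_3]\simeq[V_2,V_3]$ (together with their reverses), and the double-traversal relations, the chief one being $[V_1,V_2,V_3,V_2,V_1,V_2,V_3]\simeq[V_1,V_2,V_3]$. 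I would prove the bounce relations exactly as in Lemma~\ref{propSingle}, and the double-traversal relation exactly as in Proposition~\ref{propDoble}: the two braid moves here involve the colours $\{1,2\}$ and $\{2,3\}$ and share the middle strand of colour $2$, the remaining strand being a spectator, which is the same local configuration as on the left half of the Zamolodchikov cycle. Thus Figures~\ref{fig8}, \ref{fig3.7} and \ref{fig3.8} apply after relabelling colours.

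Granting these relations, the conclusion is immediate. The proofs of Proposition~\ref{propsimplify} and Proposition~\ref{dproof} invoke only $BS$-consistency together with the identities $DUD=UDU$, (\ref{ZZZ}) and (\ref{ZZZ2}), so they hold verbatim on $\Gamma_{23121}$. Hence any complete path is equivalent to a simplified path, and two simplified complete paths that share a direct subpath and have the same endpoints are equivalent; combined with the observation that every complete path contains a direct subpath, this yields $f(p)=f(q)$ for all complete $p,q$ with $p_a=q_a$ and $p_z=q_z$. The main obstacle is the diagrammatic step: since $23121\neq w_0$ we cannot cite \cite[Theorem~3.18]{elias2016thicker}, so the vanishing of the error terms in Figures~\ref{fig3.7}--\ref{fig3.8} must be re-verified for the colouring of $23121$. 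The point to watch is precisely that the shared colour-$2$ strand makes the two braid moves interact in the same way as for $w_{0,4}$, so that the two summands that must cancel are again killed by the relations (\cite[Eq.~2.15, 2.20, 2.22]{elias2016thicker}) used there; confirming this identification of the local pictures is the crux of the argument.
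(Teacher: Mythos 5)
Your setup is correct: $\Gamma_{23121}$ is the oriented line $\mathbf{s}=\{21321,23121\}\to c=\{23212\}\to\mathbf{t}=\{32312,32132\}$, and your observation that any complete path automatically contains a direct subpath $[\mathbf{s},c,\mathbf{t}]$ or $[\mathbf{t},c,\mathbf{s}]$ is right and is a clean way to organize the reduction. The paper does not route the argument through Propositions \ref{propsimplify} and \ref{dproof} (it enumerates the simplified paths for this small graph directly), but your route would be legitimate \emph{provided} the local relations you list were actually established. That is where the gap is, and it is genuine.

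The crux you identify — re-verifying the diagrammatic identities for the colouring of $23121$ — is disposed of with an argument that cannot work. You justify reusing Figures \ref{fig8}, \ref{fig3.7} and \ref{fig3.8} "after relabelling colours" on the grounds that the two braid moves involve the colours $\{1,2\}$ and $\{2,3\}$ and share the middle colour-$2$ strand. But the element $12321$ has exactly the same coarse data: its conflated graph is also $\mathbf{s}\to c\to\mathbf{t}$ with one $\{2,3\}$-edge ($12321\to13231$) and one $\{1,2\}$-edge ($31213\to32123$), and yet for $12321$ the corresponding relation $[\mathbf{s},c,\mathbf{t},c,\mathbf{s},c]\simeq[\mathbf{s},c,\mathbf{t},c]$ is \emph{false} — this is precisely the counterexample recorded at the end of Section \ref{generalization}. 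So any argument that sees only the colours of the two braid moves proves too much; what distinguishes $23121$ from $12321$ is the finer data (five strands, the positions of the two moves, which strands are spectators, and the distant moves inside the clouds), and the paper accordingly performs a fresh computation (Figure \ref{23121} and the figure following it) rather than recycling Proposition \ref{propDoble}. A secondary issue: your relation set (bounces plus the $\mathbf{s}$-to-$\mathbf{t}$ double traversal $[\mathbf{s},c,\mathbf{t},c,\mathbf{s},c,\mathbf{t}]\simeq[\mathbf{s},c,\mathbf{t}]$) does not visibly yield all instances of $DUD=UDU$: the relation the paper actually proves, $[\mathbf{s},c,\mathbf{t},c,\mathbf{s},c]\simeq[\mathbf{s},c,\mathbf{t},c]$, implies your double traversal (append the edge $c\to\mathbf{t}$ and cancel a bounce), but not conversely, and it is this stronger relation together with its flips that one needs to settle the remaining case $[c,\mathbf{s},c,\mathbf{t},c]\simeq[c,\mathbf{t},c,\mathbf{s},c]$.
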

	\begin{proof}
		In this case we can speak of a \textit{simplified path} $p$ similar to that of Definition \ref{simplified}. Consider $\{x,y\}=\{\mathbf{s}, \mathbf{t}\}$. These paths will be of the form 
		$p_a\rightarrow x\rightarrow y\rightarrow p_z$, where $p_a$ could be equal to $x$, and $p_z$ could be equal to $y$, or alternatively, of the form 
		$p_a\rightarrow x\rightarrow y\rightarrow x\rightarrow p_z$, where  $p_a$ and $p_z$ could be equal to $x$. The path $p_a \rightarrow x$ (resp. $y\rightarrow p_z$ in the first case, and $x\rightarrow p_z$ in the second) will have length one only when $p_a=c$ (resp. $p_z=c$), where $c$ is the only vertex different to $\mathbf{s}$ and  $\mathbf{t}$. We consider simplified paths $p$ and $q$. 
		
		$\bullet$ We first study the case $p_a, p_z\in \{\mathbf{s}, \mathbf{t}\}$. It is immediate that $p\simeq q$, since there is only one possible path, i.e., $p=q$.
		
		$\bullet$ Consider the case $p_a=\mathbf{s}$ and $p_z=c$. There are two possible simplified paths, $P_1:=[\mathbf{s}, c, \mathbf{t} , c]$ and $P_2:=[\mathbf{s}, c, \mathbf{t} , c, \mathbf{s} , c].$ 
		We will prove that $P_1$ is equivalent to $P_2$. In the following figure,  part 1) represents the path morphism of $P_2$.
		
		\begin{figure}[H]
			\centering
			\tikzset{every picture/.style={line width=0.75pt}} 

		\caption{}
		\end{figure} 
		 
		So we obtain that diagram 6) is zero. Thus, diagram 1) is equal to diagram 4), or in other words, the path morphism of $P_2$ is equal to the path morphism of $P_1.$
		
		$\bullet$ We have proved the proposition when $p_a=\mathbf{s}$ and $p_z$ is any vertex. One can prove similarly the proposition for $p_a=\mathbf{t}$ and $p_z$  any vertex, by symmetry. By flipping the diagrams we can prove the proposition for any $p_z\in \{\mathbf{s},\mathbf{t}\}$. 
		
		$\bullet$  The only case that remains to show is the equivalence for $p$ and $q$ such that $p_a=q_a=p_z=q_z=c$. There are four possible simplified paths 
		$$Q_1:=[c,\mathbf{s},c,\mathbf{t},c],  Q_2:=[c,\mathbf{t},c,\mathbf{s},c], Q_3:=[c,\mathbf{s},c,\mathbf{t},c,\mathbf{s},c], Q_4:=[c,\mathbf{t},c,\mathbf{s},c,\mathbf{t},c].$$
		In the left hand-side of the following figure, we draw the path morphism corresponding to $Q_1$, which can be seen to be equal to the path morphism corresponding to $Q_2$ after an application of the local relation (2.15) in \cite{elias2016thicker}.
		
		\begin{figure}[H]
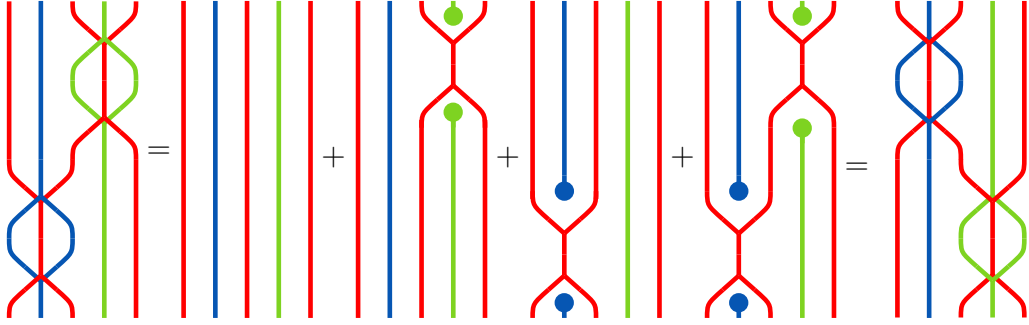

			\centering
			\tikzset{every picture/.style={line width=0.75pt}} 
			

			\caption{Proof that $f(Q_1)=f(Q_2).$}\label{fig3.12}
		\end{figure}
		By reading Figure \ref{23121} upside down, we have that $[\mathbf{s},c,\mathbf{t},c,\mathbf{s},c] \simeq [\mathbf{s},c,\mathbf{t},c]$. This way, we deduce that $Q_4 \simeq [c,\mathbf{t},c,\mathbf{s},c,\mathbf{t},c] 
		\simeq [c,\mathbf{t},c,\mathbf{s},c,\mathbf{t},c,\mathbf{s},c] \simeq [c,\mathbf{t},c,\mathbf{s},c] \simeq Q_2$. Analogously, $[c,\mathbf{s},c,\mathbf{t},c,\mathbf{s},c] \simeq [c,\mathbf{s},c,\mathbf{t},c]$, so $Q_3\simeq Q_1$.		
	\end{proof}
	
	The proof of the Forking Path Conjecture for the element $12132=12312$ is essentially the same as the proof given in Proposition \ref{prop3.12}, after applying the auto-equivalence of $\mathbb{S}Bim$ given by the unique non-trivial automorphism of the Dynkin diagram, i.e., applying a vertical symmetry to all diagrams.
	
	
	\section{The counterexample}\label{counterexample}
	
	Let us consider the element $\sigma=12321 \in S_4$. The rex graph $Rex(\sigma)$ corresponds to the following figure:
	
	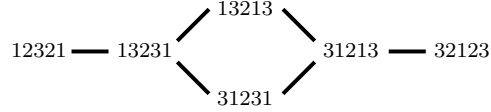
\begin{figure}[H]
		\centering
		\tikzset{every picture/.style={line width=0.75pt}} 
		\begin{tikzpicture}[x=0.4pt,y=0.4pt,yscale=-1,xscale=1]
			
			\draw [line width=1.5]  
			(160,40) -- (190,10) ;
			\draw [line width=1.5]    (60,50) -- (95,50) ;
			\draw [line width=1.5]  
			(260,10) -- (290,40) ;
			\draw [line width=1.5]    (360,50) -- (395,50) ;
			\draw [line width=1.5]  
			(260,90) -- (290,60) ;
			\draw [line width=1.5]  
			(160,60) -- (190,90) ;
			
			\draw (0,40) node [anchor=north west][inner sep=0.75pt]  [font=\scriptsize]  {$12321$};
			\draw (195,0) node [anchor=north west][inner sep=0.75pt]  [font=\scriptsize]  {$13213$};
			\draw (295,40) node [anchor=north west][inner sep=0.75pt]  [font=\scriptsize]  {$31213$};
			\draw (100,40) node [anchor=north west][inner sep=0.75pt]  [font=\scriptsize]  {$13231$};
			\draw (400,40) node [anchor=north west][inner sep=0.75pt]  [font=\scriptsize]  {$32123$};
			\draw (195,85) node [anchor=north west][inner sep=0.75pt]  [font=\scriptsize]  {$31231$};
			
		\end{tikzpicture}
		\caption{Reduced expression graph of $\sigma$.}
	\end{figure}
	
	Note that the four vertices in the middle are the same vertex in the conflated expression graph. 	
	Let us consider the element $$x:=1 \otimes_{s_1} 1 \otimes_{s_3} 1 \otimes_{s_2}  x_3 \otimes_{s_3} 1 \otimes_{s_1} 1$$ in the Bott-Samelson bimodule $B_{1}B_{3}B_{2}B_{3}B_{1}$. Let $v_1$ and $v_2$ be the following paths respectively: 
	
	\begin{figure}[H]
		\centering
		\footnotesize{ $v_1:=13231 \rightarrow  31231 \rightarrow  31213 \rightarrow  32123 \rightarrow  31213 \rightarrow  13213 \rightarrow  13231 \rightarrow  12321 \rightarrow  13231$.}\label{pathp} \\ 
		\vspace{0.2cm}
		\footnotesize{ $v_2:=13231 \rightarrow  12321 \rightarrow  13231 \rightarrow  31231 \rightarrow  31213 \rightarrow   32123 \rightarrow  31213 \rightarrow  13213 \rightarrow  13231$.}\label{pathq}
	\end{figure}
	
	To simplify calculations we need the following. From Equation (\ref{eq1}) we obtain 
	
	\begin{equation}\label{eq3}
		f_{i(i+1)}(1{\otimes_{s_i}}x_{i+1}{\otimes_{s_{i+1}}}1{\otimes_{s_i}}1) = 1{\otimes_{s_{i+1}}}1{\otimes_{s_i}}1{\otimes_{s_{i+1}}}x_{i+2}.
	\end{equation}
	
	Similarly, from Equation (\ref{eq2}) we obtain 
	
	\begin{equation}\label{eq6}	
		f_{i(i-1)}(1{\otimes_{s_i}}1{\otimes_{s_{i-1}}}x_{i}{\otimes_{s_i}}1) = x_{i-1}{\otimes_{s_{i-1}}}1{\otimes_{s_i}}1{\otimes_{s_{i-1}}}1.
	\end{equation}
	
	We will use a diagrammatic method to evaluate homomorphisms
	in $\mathbb{S}Bim$. The next figure shows the evaluation of $f(v_1)$ (left) and $f(v_2)$ (right) in the element $x$ defined above. 
	
	\begin{figure}[H]
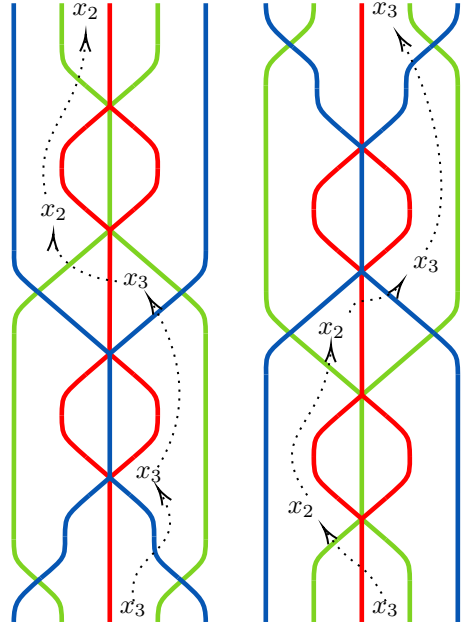

		\centering
		\tikzset{every picture/.style={line width=0.75pt}} 

		\caption{Diagrams for $v_1$ and $v_2$, evaluated in $x$.}\label{v1v2}
	\end{figure}

	We will prove that the elements obtained are different. It is known that 
	\begin{equation*}
		B_3B_2B_3 \cong B_{323} \oplus B_3 \text{ , where } B_{323} \cong \langle1^{\otimes} \rangle \text{ and }  B_3 \cong \langle 1\otimes 1\otimes x_3 \otimes 1 \rangle .
	\end{equation*}
	
	So it is possible to generate the $R$-bimodule $B_3B_2B_3$ with the elements $1^{\otimes}$ and $1\otimes 1\otimes x_3 \otimes 1$. In $B_1B_3B_2B_3B_1$, we have 
	\begin{equation*}
		f(v_1)(x) = 1 \otimes_{s_1} 1 \otimes_{s_3} 1 \otimes_{s_2} x_3 \otimes_{s_3} 1 \otimes_{s_1} 1
	\end{equation*}
	and
	\begin{equation*}
		f(v_2)(x) = 1 \otimes_{s_1} x_2 \otimes_{s_3} 1 \otimes_{s_2} 1 \otimes_{s_3} 1 \otimes_{s_1} 1.
	\end{equation*}
	
	If they were the same, applying dots over both $B_1$ in $B_1B_3B_2B_3B_1$ we would have that 
	$ x_2 \cdot 1^{\otimes} = 1\otimes_{s_3} 1 \otimes_{s_2} x_3 \otimes_{s_3} 1$ in $B_3B_2B_3$, which, as stated above, is not true.

	
	\section{A family of counterexamples}\label{generalization}
	
	The element $\sigma=12321$ is the only one where the FPC fails for the group $S_4$. We proved this by showing that the diagrams in Figure \ref{fig1} (the same diagrams as in Figure \ref{v1v2}) are not equal.
	
	\begin{figure}[H]
		\centering
		\tikzset{every picture/.style={line width=1pt}} 
		

		\caption{}\label{fig20}
	\end{figure}
	
	Considering $p$ the path $[E_2, E_1, E_2, E_3, \ldots, E_n, E_{n-1}, \ldots, E_2]$ and $q$ the path $[E_2, E_3, \ldots, E_n, E_{n-1}, \ldots, E_1, E_2]$. We can check in general that $f(p)\neq f(q)$ by evaluating these path morphisms in particular elements. We will not give a rigorous proof of this fact, but the general strategy can be inferred from Figure \ref{counters}. The purple strand is related to the index $4$. The black, to the index $5$.
	
	\begin{figure}[H]
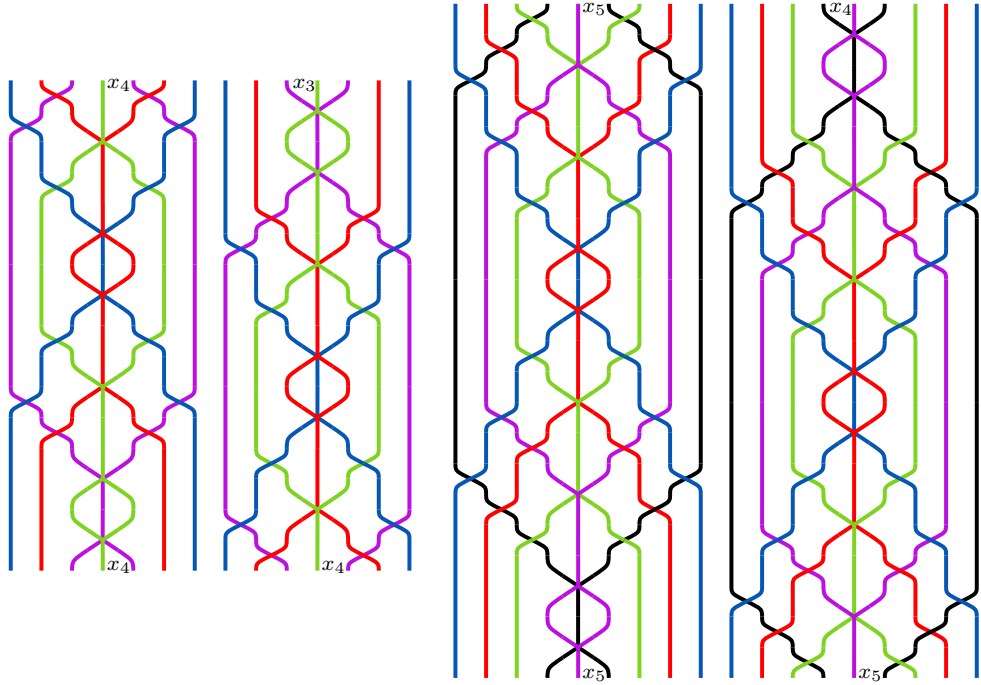

		\centering
		\tikzset{every picture/.style={line width=0.75pt}} 
		

		\caption{Induced morphisms for paths in $\Gamma_{1234321}$ and $\Gamma_{123454321}$.}\label{counters}
	\end{figure}
	
	\begin{remark}
		There are also elements of symmetric groups which can be used to produce counterexamples, and whose conflated expression graphs are not linear, i.e., elements whose conflated expression graph is different from Figure \ref{fig20}. The reader can verify that one such element is $12134325$ in $S_6$.
	\end{remark}
	
	Note that all elements in our family of counterexamples are paths with $p_a\notin \{\mathbf{s}, \mathbf{t}\}$ and $p_z\notin \{\mathbf{s}, \mathbf{t}\}$. This could make us think that the behavior for complete paths with $p_a\in \{\mathbf{s}, \mathbf{t}\}$ or $p_z \in \{\mathbf{s}, \mathbf{t}\}$ is different. That is not the case! Consider the same element $\sigma=12321$, and $\Gamma_{\sigma}$, the paths $[\mathbf{s},c,\mathbf{t},c,\mathbf{s},c]$ and $[\mathbf{s},c,\mathbf{t},c]$ give a counterexample. 
	It is enough to evaluate both path morphisms in the element $$1 \otimes_{s_1} x_2 \otimes_{s_2} 1 \otimes_{s_3} 1 \otimes_{s_2} 1 \otimes_{s_1} 1.$$
	
	These counterexamples (and some others that we do not show here) have in common particular choices of elements and paths, however, verification for other families of elements show that there is a phenomenon hidden underneath. To be precise, we have observed that for the longest element in $S_n$, any path $p$ with $p_a=\mathbf{s}$ and $p_z=\mathbf{t}$ will be equivalent to an oriented path from $\mathbf{s}$ to $\mathbf{t}$ (we proved this for $S_4$ in Section \ref{diagrammaticcalculus}). In other words, we do not need to follow the Manin-Schechtman orientation as long as we start and end in the right vertices. The same when we start in $\mathbf{t}$ and end in $\mathbf{s}$. We propose the following strengthening of the FPC for $w_0$.
	
	\begin{conjecture}
		Let $w_{0,n} \in S_n$ and $\mathbf{s}$, $\mathbf{t}$ be the source and sink of the Manin-Schechtman orientation. Let $p, q$ be two paths in $\Gamma_{w_{0,n}}$, which pass through $\mathbf{s}$ and $\mathbf{t}$, satisfying $p_a=q_a$, and $p_z=q_z$. Then $f(p)=f(q)$.
	\end{conjecture}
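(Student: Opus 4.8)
The plan is to isolate the one genuinely new ingredient and reduce everything else to the machinery of Section~\ref{sectFPC}. I claim the conjecture is equivalent to the following core statement $(\star)$: \emph{every} path morphism from $\mathbf{s}$ to $\mathbf{t}$ in $\Gamma_{w_{0,n}}$ equals $Z$, and every path morphism from $\mathbf{t}$ to $\mathbf{s}$ equals $\overline{Z}$ (Definition~\ref{Zeta}). That $(\star)$ is necessary is immediate: taking $q$ to be a direct path gives exactly this. For sufficiency, let $p$ be any path through both $\mathbf{s}$ and $\mathbf{t}$. Choose an occurrence of $\mathbf{s}$ and an occurrence of $\mathbf{t}$ along $p$; the portion of $p$ between them is a (non-oriented) subpath joining $\mathbf{s}$ and $\mathbf{t}$, hence by $(\star)$ it is equivalent to a direct path. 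Replacing that subpath by a direct path does not change $f(p)$ and produces a path containing a direct subpath $d$. Now Proposition~\ref{propsimplify} and Proposition~\ref{dproof} apply: their proofs use only the presence of a direct subpath together with the identities (\ref{ZZZ}), (\ref{ZZZ2}) and Theorem~\ref{teo3.18}, and never use completeness. Thus $p$ is equivalent to a simplified path whose morphism depends only on $p_a$ and $p_z$; doing the same to $q$ and using $p_a=q_a$, $p_z=q_z$ gives $f(p)=f(q)$.

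It remains to prove $(\star)$, and this is where all the difficulty lies. The natural approach is through the top summand. Set $e_{\mathbf{s}}=\overline{Z}\circ Z$ and $e_{\mathbf{t}}=Z\circ\overline{Z}$. By the remark following Definition~\ref{Zeta} and Theorem~\ref{teo3.18}, these are idempotents whose images are the indecomposable $B_{w_0}$, which occurs with multiplicity one in each of $B_{\mathbf{s}}$ and $B_{\mathbf{t}}$. Since every braid map $f_{sr}$ fixes $1^{\otimes}$, one has $e_{\mathbf{s}}(1^{\otimes})=\overline{Z}(Z(1^{\otimes}))=1^{\otimes}$, so $1^{\otimes}$ lies in the top summand $e_{\mathbf{s}}B_{\mathbf{s}}\cong B_{w_0}$, and likewise in $B_{\mathbf{t}}$. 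Let $\psi\colon\mathbf{s}\to\mathbf{t}$ be any path morphism; then $\psi(1^{\otimes})=1^{\otimes}=Z(1^{\otimes})$. If one can establish the factorization $\psi=e_{\mathbf{t}}\circ\psi\circ e_{\mathbf{s}}$, then $\psi$ is a degree-$0$ map between the top summands, and since $\operatorname{Hom}^{0}(B_{w_0},B_{w_0})=\mathbb{R}\cdot\mathrm{id}$ it must be a scalar multiple of $Z$; evaluating on $1^{\otimes}$ forces the scalar to be $1$, whence $\psi=Z$. The statement for $\overline{Z}$ follows by the upside-down (flip) symmetry already exploited in Section~\ref{sectFPC}.

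The \emph{main obstacle} is precisely the factorization $\psi=e_{\mathbf{t}}\circ\psi\circ e_{\mathbf{s}}$, i.e.\ showing that a path morphism $\mathbf{s}\to\mathbf{t}$ annihilates every non-top Bott--Samelson summand of $B_{\mathbf{s}}$ (equivalently lands in the top summand of $B_{\mathbf{t}}$). This cannot be bootstrapped from Proposition~\ref{propsimplify} or Proposition~\ref{dproof}, since those presuppose a direct subpath, whereas a general $\mathbf{s}\to\mathbf{t}$ path need not contain one — that is exactly the gap that prevents the Section~\ref{sectFPC} argument from proving the full statement directly. I would attack it in one of two ways. The first is to expand $\psi$ in the double leaves basis of $\operatorname{Hom}(B_{\mathbf{s}},B_{\mathbf{t}})$ and show that in degree $0$ no leaf factoring through some $w<w_0$ survives; this reduces to controlling the defects of subexpressions, and the combinatorial bookkeeping here is the delicate point, the hand computation of Section~\ref{diagrammaticcalculus} being exactly the $n=4$ instance. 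The second is an induction on $n$ using the recursive description $w_{0,n+1}=w_{0,n}\cdot n(n-1)\cdots 1$, relating $\mathbf{s}$-to-$\mathbf{t}$ paths in $\Gamma_{w_{0,n+1}}$ to those in $\Gamma_{w_{0,n}}$. In either route, the difficulty is to make the diagrammatic cancellation that kills the lower summands uniform in $n$, rather than verifying it rank by rank.
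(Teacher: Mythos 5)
First, note that the paper does not prove this statement: it is proposed in Section~\ref{generalization} as an open conjecture, so there is no ``paper proof'' to compare against. What you have written is accordingly not a proof but a reduction together with an honestly flagged obstruction. The reduction itself is sound: Propositions~\ref{propsimplify} and~\ref{dproof} indeed use only the presence of a direct subpath, the identities (\ref{ZZZ}), (\ref{ZZZ2}), Theorem~\ref{teo3.18} and BS-consistency, and never the completeness hypothesis, so once a direct subpath is available the morphism depends only on the endpoints. Splicing a direct path in place of any $\mathbf{s}$--$\mathbf{t}$ portion of $p$ is legitimate provided that portion is equivalent to a direct path, which is exactly your statement $(\star)$. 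So the conjecture is correctly reduced to $(\star)$, and your top-summand argument (factorization through $e_{\mathbf{s}}$, $e_{\mathbf{t}}$, one-dimensionality of $\mathrm{Hom}^{0}(B_{w_0},B_{w_0})$, normalization on $1^{\otimes}$) would correctly deduce $\psi=Z$ \emph{if} the factorization $\psi=e_{\mathbf{t}}\circ\psi\circ e_{\mathbf{s}}$ were known.

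The genuine gap is that factorization, and it is not a technicality: it is essentially the entire content of the conjecture. The counterexample of Section~\ref{counterexample} shows that path morphisms between non-extremal vertices do \emph{not} in general enjoy this kind of rigidity --- $f(v_1)$ and $f(v_2)$ differ precisely on a lower summand --- so any proof of $(\star)$ must exploit the special role of the source and sink, and cannot follow from a general ``path morphisms kill lower terms'' principle. The paper only establishes $(\star)$ for $n=4$, by the explicit diagrammatic case analysis of Section~\ref{diagrammaticcalculus} (Lemma~\ref{propSingle} and Proposition~\ref{propDoble}), and that computation does not visibly generalize: the candidate-path analysis there relies on the Zamolodchikov cycle being an octagon, i.e.\ on the left and right halves of $\Gamma_{w_{0,4}}$ being single chains. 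Neither of your two proposed routes (degree-$0$ double-leaves bookkeeping, or induction on $n$ via $w_{0,n+1}=w_{0,n}\cdot n(n-1)\cdots 1$) is carried out, and each faces exactly the uniformity-in-$n$ problem you name. So the proposal should be read as a clean reformulation of the conjecture --- reduce to: every $\mathbf{s}\to\mathbf{t}$ path morphism equals $Z$ --- rather than as a proof of it.
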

	
	We also conjecture the same for other choices of $\mathbf{s}$ and $\mathbf{t}$ obtained from other orientations different from the one given by Manin and Schechtman. Of course, this conjecture implies the FPC for $w_{0,n}$.
	
	\bibliographystyle{plain}
	\bibliography{references}
	
\end{document}